% ----------------------------------------------------------------
% AMS-LaTeX Paper ************************************************
% **** -----------------------------------------------------------
\documentclass{amsart}
\usepackage{graphicx}
%\usepackage{pdfpages}
% ----------------------------------------------------------------
\vfuzz2pt % Don't report over-full v-boxes if over-edge is small
\hfuzz2pt % Don't report over-full h-boxes if over-edge is small
% THEOREMS -------------------------------------------------------
\newtheorem{thm}{Theorem}[section]

\newtheorem{lem}[thm]{Lemma}
\newtheorem{prop}[thm]{Proposition}
\theoremstyle{definition}
\newtheorem{defn}[thm]{Definition}
\theoremstyle{remark}
\newtheorem{rem}[thm]{Remark}
\newtheorem{example}[thm]{Example}
\numberwithin{equation}{section}
% MATH -----------------------------------------------------------

% ----------------------------------------------------------------
\begin{document}

\title[Conservative and semiconservative random walks]
{Conservative and semiconservative random walks:\\
 recurrence and transience}%
\author{Vyacheslav M. Abramov}%
\address{24 Sagan Drive, Cranbourne North, Victoria-3977, Australia}%
\email{vabramov126@gmail.com}%

%\thanks{}%
\subjclass{60G50; 60J80; 60C05; 60K25}%
\keywords{Multi-dimensional random walks; birth-and-death process;
Markovian single-server queues}%

%\date{}%
%\dedicatory{}%
%\commby{}%
% ----------------------------------------------------------------
\begin{abstract}
In the present paper we define conservative and semiconservative
random walks in $\mathbb{Z}^d$ and study different families of
random walks. The family of symmetric random walks is one of the
families of conservative random walks, and simple (P\'olya) random
walks are their representatives. The classification of random walks
given in the present paper enables us to provide a new approach to
random walks in $\mathbb{Z}^d$ by reduction to birth-and-death
processes. We construct nontrivial examples of recurrent random
walks in $\mathbb{Z}^d$ for any $d\geq3$ and transient random walks
in $\mathbb{Z}^2$.
\end{abstract}

\maketitle
% ----------------------------------------------------------------
\section{Introduction}

It is well-known that the simple random walk (which is also called
P\'olya random walk) of dimension one or two is recurrent, i.e.
being started from the origin, it returns to the original point
infinitely many times. P\'olya random walks of dimension higher than
two are transient. These facts were originally established in 1921
by P\'olya \cite{Polya}, and nowadays there are a number of new
proofs and extensions of the P\'olya theorem (e.g.
\cite{ChungFuchs}, \cite{DoyleSnell}, \cite{FosterGood},
\cite{LyonsPeres}, \cite{Lyons}, \cite{Novak}, and others).

Although the aforementioned results are well-known and have the
distinguished history, the fact that random walks of dimension
higher than two are not recurrent, while those of dimensions one and
two are, remains mysterious and lying beyond the intuitive
understanding. The further extensions of the P\'olya theorem are
chiefly based on application of the majorization (coupling)
techniques of electric networks (e.g. \cite{LyonsPeres}) or general
methods using probability inequalities (e.g. \cite{ChungOrnstein} or
\cite{Durett}, Lemma 4.2.5). They extend the result for the random
walks of the same dimension to address the question whether or not a
modified random walk of dimension two is recurrent or that of
dimension three or higher is transient. As well, broad extensions to
P\'olya's theorem may be obtained by the method of Lyapunov
functions, dating back to Lamperti \cite{Lamperti}.

We suggest another approach for classification of random walks. The
main idea of our approach is to establish connection between random
walks and birth-and-death processes. On the basis of this
connection, we define new classes of random walks, called
\textit{conservative} and \textit{semiconservative}. With the aid of
this classification we are able to study the new cases, that was
impossible in the frameworks of the earlier methods.

Our attention in the present paper is restricted by the random walks
$\mathbf{S}_t=\big(S_t^{(1)},S_t^{(2)},\ldots,S_t^{(d)}\big)$ in
$\mathbb{Z}^d$ ($t=0,1,\ldots$ is a discrete time parameter)
satisfying the recurrence relations
\begin{eqnarray}
\mathbf{S}_0&=&\mathbf{0},\label{eq-1.1}\\
\mathbf{S}_{t}&=&\mathbf{S}_{t-1}+\mathbf{e}_t, \
t\geq1,\label{eq-1.2}
\end{eqnarray}
where $\mathbf{0}$ is the $d$-dimensional vector of zeros, and the
random vector $\mathbf{e}_t$ takes the values $\pm\mathbf{1}_i$,
$i=1,2,\ldots,d$, and $\mathbf{0}$.

Let $\mathcal{A}$ be a set of (vector-valued) parameters. Then, the
triple $\{\mathbf{S}_t,\mathcal{A},d\}$ is said to specify a family
of random walks. Let ${a}\in\mathcal{A}$. Then,
$\mathbf{S}_t({a,d})$ is a random walk that belongs to the family of
random walks $\{\mathbf{S}_t,\mathcal{A},d\}$.

\smallskip

The random walks that are studied in this paper are originated by
the following three models.

\smallskip

\textit{Model 1}. For random walks defined by \eqref{eq-1.1} and
\eqref{eq-1.2}, let the vector $\mathbf{e}_t$ be one of the $2d$
randomly chosen vectors $\{\pm \mathbf{1}_i, i=1,2,\ldots,d\}$
independently of the history and each other as follows. The
probability that the vector $\mathbf{1}_i$ will be chosen, which is
the same for the vector $(-\mathbf{1}_i)$, is equal to $\alpha_i>0$,
and $2\sum_{i=1}^d\alpha_i=1.$ Then the family
$\{\mathbf{S}_t,\mathcal{A},d\}$ is specified by the set of vectors
$\big(\alpha_1, \alpha_2,\ldots, \alpha_d\big)\in\mathcal{A}$.

The random walks of Model 1 are called \textit{symmetric} random
walks, while the title \textit{simple} or \textit{simple symmetric}
is related to P\'olya random walks, which is one of the
representatives of the symmetric random walks family.

\smallskip

\textit{Model 2}. Assume now, that the vector $\mathbf{e}_t$ is
specified as follows. If $S_{t-1}^{(i)}=0$, then the vectors
$\mathbf{1}_i$ and $(-\mathbf{1}_i)$ are chosen with equal
probability $\alpha_i$ each, where $2\sum_{i=1}^d\alpha_i=1$. If
$S_{t-1}^{(i)}\neq0$, then the probability to be chosen for each of
the vectors $\mathbf{1}_i$ and $(-\mathbf{1}_i)$ is
$\alpha_i-\delta_i/2>0$, where $\delta_i>0$. Then, the vector
$\mathbf{0}$ is chosen with the complementary probability. For
instance, if $\mathbf{S}_{t-1}=(1,3,-1)$, then the probability for
vector $\mathbf{0}$ to be chosen is $(\delta_1+\delta_2+\delta_3)$.
But if $\mathbf{S}_{t-1}=(0,2,0)$, then the probability for vector
$\mathbf{0}$ to be chosen is $\delta_2$. The family of random walks
in this model is specified by the sets of vectors $\big(\alpha_1,
\alpha_2,\ldots, \alpha_d\big)$ and $\big(\delta_1, \delta_2,\ldots,
\delta_d\big)$, where the components of these vectors satisfy the
inequality
\begin{equation*}\label{eq-1.7}
1 \  \leq\frac{2\alpha_i}{2\alpha_i-\delta_i}\ <\ \infty.
\end{equation*}

\smallskip

\textit{Model 3}. Assume now, that the vector $\mathbf{e}_t$ is
specified as follows. If $S_{t-1}^{(i)}\neq0$, then the vectors
$\mathbf{1}_i$ and $(-\mathbf{1}_i)$ are chosen with equal
probability $\alpha_i$ each, where $2\sum_{i=1}^d\alpha_i=1$. If
$S_{t-1}^{(i)}=0$, then the probability to be chosen for each of the
vectors $\mathbf{1}_i$ and $(-\mathbf{1}_i)$ is
$\alpha_i-\delta_i/2>0$, where $\delta_i>0$. Then, the vector
$\mathbf{0}$ is chosen with the complementary probability. As in
Model 2, the family of random walks is specified by the sets of
vectors $\big(\alpha_1, \alpha_2,\ldots, \alpha_d\big)$ and
$\big(\delta_1, \delta_2,\ldots, \delta_d\big)$, and the components
of these vectors satisfy the inequalities
\begin{equation*}\label{eq-1.8}
0 \ <  \ \frac{2\alpha_i-\delta_i}{2\alpha_i}\  \leq\ 1.
\end{equation*}

\smallskip
In the case of Models 2 and 3, the set $\mathcal{A}$ is the sets of
vectors $\big(\alpha_1, \alpha_2,\ldots, \alpha_d\big)$ and
$\big(\delta_1, \delta_2,\ldots, \delta_d\big)$. That is, an element
$a\in\mathcal{A}$ is the vector of dimension $2d$. It is easy to see
that Model 1 is particular to both of Models 2 and 3 when
$\delta_i=0$, $i=1,2,\ldots,d$. All the three models could be
amalgamated to a common model, the study of which can be further
extended. For the purpose of the present paper, however, it is
profitable to start from Models 1, 2 and 3 in order to initiate the
study of more general models in a gradual way.

Let $\mathbf{n}=\big(n^{(1)}, n^{(2)},\ldots, n^{(d)}\big)$ be a
vector in $\mathbb{Z}^d$. Its norm is defined by
$$
\|\mathbf{n}\|=\sum_{i=1}^d\big|n^{(i)}\big|.
$$

By the sequence of \textit{active} time instants $t_1, t_2,\ldots,
t_j,\ldots$, we mean
$$
\begin{aligned}
t_1=&\inf\{t>0: \|\mathbf{S}_t\|>0\}, \quad t_2=\inf\{t>t_1:
\|\mathbf{S}_t\|\neq\|\mathbf{S}_{t_1}\|\},\ldots,\\
&t_{j}=\inf\{t>t_{j-1}:
\|\mathbf{S}_t\|\neq\|\mathbf{S}_{t_{j-1}}\|\},\ldots
\end{aligned}
$$
In other words, the active time instants are the times when a random
walk changes its state. Note, that for the family of random walks in
Model 1, all the time instants $t=1,2,\ldots,$ are active.

By sequence of \textit{up-crossing} time instants $t_1^\prime$,
$t_2^\prime$,\ldots, $t_j^\prime$,\ldots, we mean
$$
\begin{aligned}
t_1^\prime=t_1, \quad &t_2^\prime=\inf\{t>t_1^\prime:
\|\mathbf{S}_t\|=\|\mathbf{S}_{t-1}\|+1\}, \ldots,\\
&t_{j}^\prime=\inf\{t>t_{j-1}^\prime:\|\mathbf{S}_t\|=\|\mathbf{S}_{t-1}\|+1\},\ldots
\end{aligned}
$$

In the following considerations, without loss of generality we
assume that the time intervals between active time instants are
exponentially distributed.

Specifically, with any state $\mathbf{n}\in\mathbb{Z}^d$ we
associate $2d$ independent Poisson processes. As a random walk is in
state $\mathbf{n}$, these Poisson processes define the direction of
the following movement of that random walk. For instance, in the
case of Model 1, for any state $\mathbf{n}\in\mathbb{Z}^d$ the
Poisson processes have the rates
\begin{equation}\label{eq-1.9}
{\alpha_1}, {\alpha_1}, {\alpha_2},
{\alpha_2},\ldots,\underbrace{\alpha_i}_{+\mathbf{1}_i},
\underbrace{\alpha_i}_{-\mathbf{1}_i},\ldots, {\alpha_d},
{\alpha_d},
\end{equation}
where the notation $\underbrace{\alpha_i}_{+\mathbf{1}_i}$ indicates
that the Poisson process with rate $\alpha_i$ is associated with the
direction $\mathbf{1}_i$, and, respectively, the notation
$\underbrace{\alpha_i}_{-\mathbf{1}_i}$ indicates that the Poisson
process with rate $\alpha_i$ is associated with the direction
$(-\mathbf{1}_i)$. So, in \eqref{eq-1.9} the rates with odd order
number in the row are associated with ``positive" unit direction,
while the rates with even order number with ``negative". Then, the
time between consecutive jumps is exponentially distributed with
mean $2\sum_{i=1}^d\alpha_i=1$, and the direction of the jump is
associated with the location of the minimum of the $2d$
exponentially distributed ``inter-jump" times associated with the
aforementioned Poisson processes.

In the case of Models 2 or 3, the rates of Poisson processes are
state dependent. If in the primary scale
$\mathsf{E}(t_{j+1}-t_j)=\nu$, then the length $t_{j+1}-t_j$ in the
new scale is to be exponentially distributed with mean $\nu$.

Let $P_{t_j}^{(a)}(n)$, $n\geq0$, $a\in\mathcal{A}$, denote the
transition probability
$$
P_{t_j}^{(a,d)}(n)=\mathsf{P}\big\{\|\mathbf{S}_{t_{j+1}}({a},d)\|=n+1
~\big|~\|\mathbf{S}_{t_j}({a},d)\|=n\big\}.
$$
Then,
\begin{equation}\label{eq-1.6}
p^{(a,d)}(n)=\lim_{j\to\infty}P_{t_j}^{(a,d)}(n)
\end{equation}
for all $n\geq0$ and $a\in\mathcal{A}$ is the notation for this
limiting probability.

\begin{defn}
The family of random walks $\{\mathbf{S}_t,\mathcal{A},d\}$ is
called \textit{conservative} (or
$(\mathcal{A},d)$-\textit{conservative}), if for any
$a_1\in\mathcal{A}$ and $a_2\in\mathcal{A}$ and all $n\geq0$
\begin{equation*}\label{eq-1.3}
p^{(a_1,d)}(n)=p^{(a_2,d)}(n)\equiv p^{(d)}(n),
\end{equation*}
and \textit{semiconservative}
(($\mathcal{A},d)$-\textit{semiconservative}) if there exists
$a^*\in\mathcal{A}$ such that for all $a\in\mathcal{A}$ and $n\geq0$
either
\begin{equation*}\label{eq-1.4}
p^{(a^*,d)}(n)\leq p^{(a,d)}(n),
\end{equation*}
or
\begin{equation*}\label{eq-1.5}
p^{(a^*,d)}(n)\geq p^{(a,d)}(n).
\end{equation*}
\end{defn}

The rest of the paper is organized as follows. In Section \ref{S2},
we study the family of symmetric random walks of Model 1 and prove
that it is $(\mathcal{A},d)$-conservative, and hence the limit in
\eqref{eq-1.6} describes a unique sequence $p^{(a,d)}(n)\equiv
p^{(d)}(n)$ for all $a\in\mathcal{A}$ and any $d$. The analysis in
Section \ref{S3} is related to the families of random walks of
Models 2 and 3. It is shown that the families of random walks of
these models are semiconservative. In Section \ref{S4}, we study
recurrent and transient random walks. We prove that all previously
considered Models 1, 2 and 3 are recurrent for $d\leq2$ and
transient for $d\geq3$. We extend the results for a possibly most
general state-dependent model (Model 5). In the same section, we
give non-trivial examples of transient two-dimensional random walks
and recurrent $d$-dimensional random walks for $d\geq3$ (which
reduce to the systems of specifically defined independent
null-recurrent birth-and-death processes). In Section \ref{S5}, we
discuss the results and conclude the paper.

\section{Symmetric random walks}\label{S2}

In this section we study symmetric random walks for Model 1. Let
$\{\mathbf{S}_t,\mathcal{A},d\}$ denote a family of symmetric random
walks in $\mathbb{Z}^d$, and let $\mathbf{S}_t(a,d)$ be a random
walk, $a=(\alpha_1,\alpha_2,\ldots,\alpha_d)$.

Let $\varphi_t(a,d)=\|\mathbf{S}_t(a,d)\|$. The functional
$\varphi_t(a,d)$ satisfies the following recurrence relations
\begin{eqnarray}
\varphi_0(a,d)&=&0,\label{eq-2.1}\\
\varphi_t(a,d)&=&|\varphi_{t-1}(a,d)+X_t(\varphi_{t-1}(a),a,d)|,
\quad t\geq1,\label{eq-2.2}
\end{eqnarray}
where
\begin{equation}\label{eq-2.3}
X_t(n,a,d)=\begin{cases}+1, &\text{with probability} \quad
P_t^{(a,d)}(n),\\
-1, &\text{with probability} \quad Q_t^{(a,d)}(n)=1-P_t^{(a,d)}(n).
\end{cases}
\end{equation}

Let $BD(\gamma,d)$ denote the family of birth-and-death processes,
$\gamma>0$, $d\geq1$ ($d$ is the integer-valued parameter associated
with the dimension of random walks), with the birth and death
parameters $\lambda_n(\gamma,d)$ and $\mu_n(\gamma,d)$,
respectively, where
$$
\lambda_n(\gamma,d)=\sum_{i=1}^d
i\gamma^{i}\binom{d}{i}\binom{n-1}{i-1}+\gamma\sum_{i=1}^{d-1}(d-i)
\gamma^{i}\binom{d}{i}\binom{n-1}{i-1},
$$
and
$$
\mu_n(\gamma,d)=\sum_{i=1}^d
i\gamma^{i}\binom{d}{i}\binom{n-1}{i-1},
$$
$n\geq1$. (The parameter $\lambda_0(\gamma,d)>0$ is not used and
hence can be taken arbitrarily.)

Notice that with $d=1$ the family $BD(\gamma,1)$ is trivial, since
$\lambda_n(\gamma,1)=\mu_n(\gamma,1)$. Therefore, in the sequel the
only case $d\geq2$ is considered.

\smallskip

We prove the following result.
\begin{thm}\label{T1}
The family of random walks $\{\mathbf{S}_t,\mathcal{A},d\}$ is
$(\mathcal{A},d)$-conservative. It is characterized by the family of
birth-and-death processes $BD(2,d)$. Specifically,
$$p^{(a,d)}(n)\equiv p^{(d)}(n)=\frac{\lambda_n(2,d)}{\lambda_n(2,d)+\mu_n(2,d)}$$ for
all $a\in\mathcal{A}$ and $d\geq2$.
\end{thm}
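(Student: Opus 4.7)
The plan is to reduce the analysis to the \emph{folded} chain $\mathbf{T}_t=(|S_t^{(1)}|,\ldots,|S_t^{(d)}|)\in\mathbb{Z}_{\geq 0}^d$. Because Model~1 assigns the same probability $\alpha_i$ to $+\mathbf{1}_i$ and $-\mathbf{1}_i$, the dynamics of $\mathbf{T}_t$ do not depend on the signs of the original $S_t^{(i)}$, so $(\mathbf{T}_t)$ is itself Markov: from $\mathbf{m}$ with $m^{(i)}>0$ the $i$-th coordinate moves to $m^{(i)}\pm 1$ with probability $\alpha_i$ each, and from $\mathbf{m}$ with $m^{(i)}=0$ it is reflected to $1$ with probability $2\alpha_i$. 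Since $\|\mathbf{T}_t\|=\|\mathbf{S}_t\|$, one may work with $(\mathbf{T}_t)$. Setting $k(\mathbf{m})=\#\{i:m^{(i)}>0\}$, a direct check of the detailed-balance equations shows that $\pi(\mathbf{m})=2^{k(\mathbf{m})}$ (the push-forward to $\mathbb{Z}_{\geq 0}^d$ of counting measure on $\mathbb{Z}^d$) is reversible for $(\mathbf{T}_t)$; crucially, $\pi$ does not depend on $a\in\mathcal{A}$.

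The main step is to show that the limiting conditional law of $\mathbf{T}_{t_j}$ on the shell $\{\|\mathbf{m}\|=n\}$ as $j\to\infty$ is the normalized restriction of $\pi$ to that shell. I would justify this via the ratio ergodic theorem (when $d\leq 2$ the chain is null-recurrent) or via a local-CLT / reversibility argument (when $d\geq 3$ the chain is transient). Granted this, writing $P_+(\mathbf{m})=\tfrac12+\sum_{i:m^{(i)}=0}\alpha_i$ for the probability of ascending from $\mathbf{m}$ and $\Pi_n(d):=\sum_{k=1}^{\min(d,n)}2^k\binom{d}{k}\binom{n-1}{k-1}$ for the $\pi$-mass of the shell, one observes that $\sum_{\mathbf{m}:m^{(i)}=0,\|\mathbf{m}\|=n}\pi(\mathbf{m})=\Pi_n(d-1)$ \emph{independently of $i$}; combining this with $\sum_i\alpha_i=\tfrac12$ yields
$$
p^{(a,d)}(n)=\tfrac12+\frac{\Pi_n(d-1)}{2\,\Pi_n(d)},
$$
which is independent of $a$ and thereby proves $(\mathcal{A},d)$-conservativeness. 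Two elementary identities---$\lambda_n(2,d)+\mu_n(2,d)=2d\,\Pi_n(d)$ (by regrouping the defining sums) and $\mu_n(2,d)=d[\Pi_n(d)-\Pi_n(d-1)]$ (using $d\binom{d-1}{k-1}=k\binom{d}{k}$)---then transform the displayed expression into $\lambda_n(2,d)/(\lambda_n(2,d)+\mu_n(2,d))$.

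The principal obstacle is the convergence asserted in the main step: one must show that $P_{t_j}^{(a,d)}(n)$ genuinely stabilizes as $j\to\infty$ to the $\pi$-weighted average of $P_+$. For $d\leq 2$ this follows from the ratio ergodic theorem for the null-recurrent chain $(\mathbf{T}_t)$, together with the aperiodicity-modulo-parity of the norm process. For $d\geq 3$, since $(\mathbf{T}_t)$ is transient, long-time averages do not apply directly; one would appeal to the local central limit theorem to show that $\mathsf{P}(\mathbf{S}_j=\mathbf{m})/\mathsf{P}(\|\mathbf{S}_j\|=n)\to 2^{k(\mathbf{m})}/\Pi_n(d)$ for $\mathbf{m}$ in the shell, or alternatively analyze the transition kernel induced between successive visits to the shell using reversibility. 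The remaining steps are routine binomial manipulations.
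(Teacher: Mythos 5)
Your proposal is correct and rests on the same core reduction as the paper: pass to the folded (reflected) walk on $\mathbb{Z}_{\geq0}^d$, identify the invariant/reversible measure $\pi(\mathbf{m})=2^{\#\{i:\,m^{(i)}>0\}}$ (the paper obtains exactly this, in the form $P_N(\mathbf{n})\propto 2^{d-d_0(\mathbf{n})}$, as the stationary law of $d$ independent truncated M/M/1-type queues with a ``negative service'' at zero), and argue that the conditional law on the shell $\{\|\mathbf{m}\|=n\}$ stabilizes to the normalized restriction of $\pi$, after which $p^{(d)}(n)$ is the $\pi$-weighted average of the ascent probability. The execution differs in two places, both to your advantage. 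First, the final formula: you compute the weighted average in closed form, $p^{(d)}(n)=\tfrac12+\Pi_n(d-1)/(2\Pi_n(d))$, and match it to $\lambda_n(2,d)/(\lambda_n(2,d)+\mu_n(2,d))$ by the identities $\lambda_n+\mu_n=2d\,\Pi_n(d)$ and $\mu_n=d[\Pi_n(d)-\Pi_n(d-1)]$ (both check out); the paper instead uses an ad hoc ``sample space'' count of transitions, and handles general $(\alpha_1,\ldots,\alpha_d)$ by a separate proportionality argument, whereas in your computation the independence of $a$ falls out automatically from $\sum_i\alpha_i=\tfrac12$ together with the fact that $\sum_{\mathbf{m}:m^{(i)}=0}\pi(\mathbf{m})=\Pi_n(d-1)$ does not depend on $i$. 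Second, the convergence of the shell distribution: the paper justifies it via PASTA and a ``Chapman--Kolmogorov for large $j$'' assertion on the truncated system as $N\to\infty$, which is no more rigorous than your sketch; you correctly isolate this as the one genuinely delicate step and name the right tools (strong ratio limit theorem in the null-recurrent case $d\leq2$, local CLT with the parity caveat in the transient case $d\geq3$). If you write this step out carefully, your argument is a complete and in fact tighter proof than the one in the paper.
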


\begin{proof} For convenience, the
arguments $a$ and $d$ in the following notation for
$\mathbf{S}_t(a,d)$ is omitted, since in this proof we deal with a
unique random walk.
 Along with the original random walk $\mathbf{S}_t$ in
$\mathbb{Z}^d$, one can consider the reflected random walk
$\breve{\mathbf{S}}_t=\Big(\breve{S}_t^{(1)}$,$\breve{S}_t^{(2)}$,\ldots,
$\breve{S}_t^{(d)}\Big)$ in $\mathbb{Z}^d_+$ defined as
\begin{eqnarray}
\breve{\mathbf{S}}_0&=&\mathbf{0},\label{eq-3.10}\\
\breve{\mathbf{S}}_t&=&\breve{\mathbf{S}}_{t-1}+\mathbf{r}_t,\ \quad
t\geq1,\label{eq-3.11}
\end{eqnarray}
where
\begin{equation*}
{\mathbf{r}}_t=\begin{cases} \mathbf{e}_t, &\text{if}\
 \breve{S}_{t-1}^{(i)}+e_t^{(i)}\geq0 \ \text{for all} \ i=1,2,\ldots,d,\\
-\mathbf{e}_t, &\text{if}\ \breve{S}_{t-1}^{(i)}+e^{(i)}_t=-1 \
\text{for a certain} \ i=1,2,\ldots,d,
\end{cases}
\end{equation*}
and the vector $\mathbf{e}_t=\big(e_t^{(1)}$, $e_t^{(2)}$,\ldots,
$e_t^{(d)}\big)$ is the vector that was defined earlier for the
random walk $\mathbf{S}_t$ in \eqref{eq-1.1} and \eqref{eq-1.2}.

Observing \eqref{eq-3.10}, \eqref{eq-3.11} and their comparison with
\eqref{eq-1.1}, \eqref{eq-1.2} enables us to conclude that
$\|\breve{\mathbf{S}}_t\|$ and $\|\mathbf{S}_t\|$ coincide in
distribution. So, one can restrict ourselves by modeling the random
walk $\breve{\mathbf{S}}_t$. The last one is described by the $d$
independent queueing processes as follows. Assume that arrivals in
the $i$th queueing system are Poisson with rate $\alpha_i$, and
service times are exponentially distributed with the same rate
$\alpha_i$. If a system becomes free, it is switched for a special
service with the same rate $\alpha_i$. This service is
\textit{negative}, and it results in a new customer in the queue. If
during a \textit{negative service} a new arrival occurs, the
negative service remains unfinished and not resumed.

The negative service models the reflection at zero and in fact
implies the state-dependent arrival rate, which becomes equal to
$2\alpha_i$ at the moment when the system is empty. It is associated
with the situation, when an original one-dimensional random walk
reaches zero at some time moment $s$, and at the next time moment
$s+1$ it must take one of values $\pm1$ that corresponds to value +1
for an one-dimensional random walk \textit{reflected at zero}.

To establish necessary properties of the $d$ independent queueing
systems, assume first that the number of waiting places in each of
the queueing systems is $N$, where $N$ is taken large enough, such
that for a given $d$-dimensional vectors $\mathbf{n}\in
\mathbb{Z}^d_+$, we have $\|\mathbf{n}\|<N$. The assumption on
limited number of waiting places means that an arriving customer,
who meets $N$ customers in the system, is lost. Let
$P_N(\mathbf{n})$ denote the stationary probability to be in state
$\mathbf{n}$ immediately before an arrival of a customer in one of
the $d$ queueing systems. Following the PASTA property \cite{Wolff},
the stationary probabilities can be derived on the basis of the
backward Chapman-Kolmogorov equations for a continuous Markov
process (the random walk $\breve{\mathbf{S}}_t$ is reckoned to be
extended to the continuous time process). Calculate first the
stationary probability for a single queueing system. Denote by
$P_N^{(i)}(n)$ the stationary probability to be in state $n$ in the
$i$th queueing system. From the balance equations, we obtain
$P_N^{(i)}(n+1)=P_N^{(i)}(n)$ for $n\geq1$, and $P_N(1)=2P_N(0)$.
So, we arrive at
\begin{equation}\label{eq-3.12}
P_N^{(i)}(n)=\begin{cases}\frac{2}{2N+1}, &\text{for} \ 1\leq n\leq N,\\
\frac{1}{2N+1}, &\text{for} \ n=0.
\end{cases}
\end{equation}
Notice that the stationary probabilities do not depend on the rate
$\alpha_i$. Since the queueing systems are independent, we arrive at
the product form solution
\begin{equation}\label{eq-3.13}
P_N(\mathbf{n})=\prod_{i=1}^dP_N^{(i)}\left(n^{(i)}\right).
\end{equation}
Substituting \eqref{eq-3.12} into \eqref{eq-3.13} we may obtain the
exact form solution. For any vector $\mathbf{n}\in\mathbb{Z}^d_+$,
let $d_0(\mathbf{n})$ denote the number of zero components in the
presentation of the vector $\mathbf{n}$. For example, vector
(1,0,2,0) contains two zero components. Then,
\begin{equation}\label{eq-3.14}
P_N(\mathbf{n})=2^{d-d_0(\mathbf{n})}\frac{1}{(2N+1)^d}
\end{equation}
is the required formula for the stationary probability, and for two
arbitrary states $\mathbf{n}_1$ and $\mathbf{n}_2$, the ratio
$$
\frac{P_N(\mathbf{n}_1)}{P_N(\mathbf{n}_2)}=2^{d_0(\mathbf{n}_2)
-d_0(\mathbf{n}_1)}
$$
does not depend on $N$.  Hence,
\begin{equation}\label{eq-3.5}
\lim_{N\to\infty}\frac{P_N(\mathbf{n}_1)}{P_N(\mathbf{n}_2)}=2^{d_0(\mathbf{n}_2)
-d_0(\mathbf{n}_1)}.
\end{equation}
Next, denote
\begin{equation}\label{eq-3.18}
\Pi_{\infty}(\mathbf{n}_1,\mathbf{n}_2)=\lim_{j\to\infty}
\frac{\mathsf{P}\left\{\breve{\mathbf{S}}_{t_j^\prime-1}=\mathbf{n}_1\right\}}
{\mathsf{P}\left\{\breve{\mathbf{S}}_{t_j^\prime-1}=\mathbf{n}_2\right\}}.
\end{equation}
Here, in \eqref{eq-3.18}, the subindex $t_j^\prime-1$ denotes the
time instant preceding the $j$th up-crossing time instant
$t_j^\prime$. As well, by the product rule and the PASTA property
$$
\Pi_{\infty}(\mathbf{n}_1,\mathbf{n}_2)=\prod_{i=1}^d\Pi_{\infty}^{(i)}
\big(n_1^{(i)},n_2^{(i)}\big)=\prod_{i=1}^d\lim_{j\to\infty}
\frac{\mathsf{P}\left\{\breve{{S}}_{t_j^\prime-1}^{(i)}={n}_1^{(i)}\right\}}
{\mathsf{P}\left\{\breve{S}_{t_j^\prime-1}^{(i)}={n}_2^{(i)}\right\}}.
$$
Note, that
$$\sum_{n=0}^\infty\lim_{j\to\infty}
\mathsf{P}\left\{\breve{{S}}_{t_j^\prime-1}^{(i)}=n\right\}
=\lim_{j\to\infty}\sum_{n=0}^\infty\mathsf{P}\left\{
\breve{{S}}_{t_j^\prime-1}^{(i)}=n\right\} =1, \quad i=1,2,\ldots,
d,
$$
and on the basis of the Chapman-Kolmogorov equations for large $j$
we obtain
$$
\mathsf{P}\left\{\breve{{S}}_{t_j^\prime-1}^{(i)}=1\right\}=2
\mathsf{P}\left\{\breve{{S}}_{t_j^\prime-1}^{(i)}=0\right\}[1+o(1)],
$$
and
$$
\mathsf{P}\left\{\breve{{S}}_{t_j^\prime-1}^{(i)}=n+1\right\}=
\mathsf{P}\left\{\breve{{S}}_{t_j^\prime-1}^{(i)}=n\right\}[1+o(1)]
$$
for $n\geq1$. Then, for $n_1^{(i)}>n_2^{(i)}$, we obtain
\begin{equation}\label{eq-3.19}
\lim_{j\to\infty}\frac{\mathsf{P}\left\{
\breve{{S}}_{t_j^\prime-1}^{(i)}=n_1^{(i)}\right\}}
{\mathsf{P}\left\{
\breve{{S}}_{t_j^\prime-1}^{(i)}=n_2^{(i)}\right\}}=
\begin{cases}1, &\text{if} \ n_2^{(i)}>0,\\
2, &\text{if} \ n_2^{(i)}=0.\\
\end{cases}
\end{equation}

Now, according to \eqref{eq-3.5}, \eqref{eq-3.18} and
\eqref{eq-3.19}, we arrive at
\begin{equation}\label{eq-3.25}
\Pi_{\infty}(\mathbf{n}_1,\mathbf{n}_2)=\lim_{N\to\infty}\frac{P_N(\mathbf{n}_1)}{P_N(\mathbf{n}_2)}=2^{d_0(\mathbf{n}_2)
-d_0(\mathbf{n}_1)}.
\end{equation}

Now, let $\mathcal{N}^+(n)$ denote the set of all vectors in
$\mathbb{Z}^d_+$ having norm $n$. The total number of vectors having
norm $n$ in $\mathbb{Z}^d_+$ is
\begin{equation}\label{eq-3.24}
\sum_{i=1}^d\binom{d}{i}\binom{n-1}{i-1}.
\end{equation}
Here and later on, the formula sums over $i$ being the number of
nonzero components of the vector.

Hence, denoting the stationary state probability to belong to the
set $\mathcal{N}^+(n)$ by $P_N[\mathcal{N}^+(n)]$, from
\eqref{eq-3.14} we obtain
\begin{equation}\label{eq-3.16}
P_N[\mathcal{N}^+(n)]=\sum_{\mathbf{n}\in\mathcal{N}^+(n)}P_N(\mathbf{n})
=\frac{1}{(2N+1)^d}\sum_{i=1}^d2^i\binom{d}{i}\binom{n-1}{i-1}.
\end{equation}
Here, in the right-hand side of \eqref{eq-3.16}, the term
$\sum_{i=1}^d2^i\binom{d}{i}\binom{n-1}{i-1}$   characterizes the
total number of elements in $\mathbb{Z}^d$ having norm $n$.

Let $p_n(d)$ denote the transition probability from the set of
states $\mathcal{N}^+(n)$ (level $n$) to the set of states
$\mathcal{N}^+(n+1)$ (level $n+1$), and let $q_n(d)=1-p_n(d)$ denote
the transition probability from the level $n$ to the level $n-1$.

Our task now is to derive the formula for $p_n(d)$, and for this
derivation we use combinatorial arguments.

The total number of vectors in the set $\mathcal{N}^+(n)$ is given
by \eqref{eq-3.24}. Each vector contains $d$ components. Hence, the
total number of components in the set of vectors in
$\mathcal{N}^+(n)$ is
\begin{equation}\label{eq-3.26}
d\sum_{i=1}^d\binom{d}{i}\binom{n-1}{i-1}.
\end{equation}
Among them, the total number of zero components is
\begin{equation*}\label{eq-3.1}
\sum_{i=1}^{d-1}(d-i)\binom{d}{i}\binom{n-1}{i-1},
\end{equation*}
and the total number of nonzero components in all the aforementioned
vectors is
\begin{equation*}\label{eq-3.2}
\sum_{i=1}^d i\binom{d}{i}\binom{n-1}{i-1}.
\end{equation*}

Based on \eqref{eq-3.14} or \eqref{eq-3.16}, it will be proved below
the following relationships
\begin{equation}\label{eq-6.2}
\begin{aligned}
p_n(d)&=\frac{2\sum_{i=1}^{d-1}(d-i)2^{i}\binom{d}{i}\binom{n-1}{i-1}+
\sum_{i=1}^{d}i2^{i}\binom{d}{i}\binom{n-1}{i-1}}
{2d\sum_{i=1}^{d}2^{i}\binom{d}{i}\binom{n-1}{i-1}}\\
&=\frac{C_0(n,d)+C(n,d)}{C_0(n,d)+2C(n,d)},
\end{aligned}
\end{equation}
and
$$
q_n(d)=\frac{C(n,d)}{C_0(n,d)+2C(n,d)},
$$
where
\begin{equation}\label{eq-2.4}
C(n,d)=\sum_{i=1}^{d}i2^{i}\binom{d}{i}\binom{n-1}{i-1},
\end{equation}
and
\begin{equation}\label{eq-2.5}
C_0(n,d)=2\sum_{i=1}^{d-1}(d-i)2^{i}\binom{d}{i}\binom{n-1}{i-1}.
\end{equation}
The plan of the proof is as follows. We first consider the case of
$\alpha_i=1/(2d)$ for all $i=1,2,\ldots,d$ (P\'olya random walk),
and then develop the proof for the general situation.

\smallskip
\textit{Case of P\'olya random walk.} First, we build the sample
space. The components of all vectors in $\mathcal{N}^+(n)$, the
total number of which is given by \eqref{eq-3.26} are not equally
likely. According to \eqref{eq-3.14}, a nonzero component appears
with two times higher probability than a zero component. To make the
components equally likely, we are to extend the number of nonzero
components by factor 2. Then the total number of equally likely
components is to be equal to
\begin{equation}\label{eq-3.27}
 d\sum_{i=1}^d2^i\binom{d}{i}\binom{n-1}{i-1}.
\end{equation}
Following \eqref{eq-3.27}, the sample space for level $n$ contains
$$
2d\sum_{i=1}^d2^i\binom{d}{i}\binom{n-1}{i-1}=C_0(n,d)+2C(n,d)
$$
states characterizing the number of possible transitions of the
vectors in $\mathbb{Z}^d$ having norm $n$, where $C(n,d)$ and
$C_0(n,d)$ are given by \eqref{eq-2.4} and \eqref{eq-2.5}.
Specifically, $C_0(n,d)$ is the number of possible transitions
associated with reflections at zero. The rest of all possible
transitions of the sample space is $2C(n,d)$. Half of them
characterize transitions from level $n$ to $n+1$ and half from level
$n$ to $n-1$. Hence, the total number of transitions from level $n$
to level $n+1$ is $C_0(n,d)+C(n,d)$, and we arrive at
\eqref{eq-6.2}. So, in the case $\alpha_i=1/(2d)$, $i=1,2,\ldots,d$
relation \eqref{eq-6.2} is explained.

\smallskip
\textit{General case.} We prove now, that \eqref{eq-6.2} remains
true in the general case of $a=(\alpha_1,\alpha_2,\ldots,\alpha_d)$,
in which the probability of a transition of the $j$th coordinate of
a vector is
$$r_j=\frac{\alpha_j}{\alpha_1+\alpha_2+\ldots+\alpha_d}.$$

Since, the set of states has the symmetric structure, then the
contribution of the different terms $r_j$, $j=1,2,\ldots,d$ changes
the terms $C_0(n,d)$ and $C(n,d)$ proportionally. Indeed, consider
the $i$th term of the sum in \eqref{eq-2.4}. It is associated with
$i2^i\binom{d}{i}\binom{n-1}{i-1}$ elements. Assuming that the
choice of the $j$th coordinate of the vector $\mathbf{n}$  has the
weight $r_j$, it can be seen that the fraction of the terms
(weights) $r_j$ for different $j$ is the same among the total
$i2^i\binom{d}{i}\binom{n-1}{i-1}$ elements. That is, the terms
$r_j$ all are uniformly concentrated among
$i2^i\binom{d}{i}\binom{n-1}{i-1}$ elements, and this is true for
all $i$. In this case, instead of the term $C(n,d)$ we have the
modified term (weighted sum) denoted by $\tilde{C}(n,d)$. Then, we
write the relation
\begin{equation}\label{eq-3.3}
\tilde{C}(n,d)=\tilde{c}(n,d)C(n,d).
\end{equation}
However, the similar arguments are valid for the $i$th terms of the
sum in \eqref{eq-2.5}. Hence, denoting the corresponding weighted
sum by $\tilde{C}_0(n,d)$, we have
\begin{equation}\label{eq-3.4}
\tilde{C}_0(n,d)=\tilde{c}(n,d)C_0(n,d)
\end{equation}
with the same proportion coefficient $\tilde{c}(n,d)$. That is,
instead of $C_0(n,d)$ and $C(n,d)$ we are to have $\tilde{C}(n,d)$
and $\tilde{C}_0(n,d)$ defined by \eqref{eq-3.3} and \eqref{eq-3.4},
respectively. Hence, the constant $\tilde{c}(n,d)$ being presented
both in numerator and denominator of \eqref{eq-6.2} finally reduces,
and the probabilities $r_j$, $j=1,2,\ldots,d$, have no influence on
the parameters $p_n(d)$ and $q_n(d)$.
\smallskip

As we can see, these transition probabilities depend neither on $N$
nor on parameters $\alpha_i$, $i=1,2,\ldots,d$, but on $d$ only.
Hence, as $N$ increases to infinity, the parameters $p_n(d)$ and
$q_n(d)$ remain unchanged. This means that for any
$a\in\mathcal{A}$, the limiting relation in \eqref{eq-1.6} is
satisfied, and with \eqref{eq-6.2} the limiting birth-and-death
process is $BD(2,d)$. Thus, the family of random walks
$\{\mathbf{S}_t,\mathcal{A},d\}$ is $(\mathcal{A},d)$-conservative.
\end{proof}

\section{State-dependent random walks of Models 2 and 3}\label{S3}

In this section we prove the following results.

\begin{thm}\label{T2}
The family of random walks $\{\mathbf{S}_t,\mathcal{A},d\}$ defined
in Models 2 and 3 are semiconservative.
\end{thm}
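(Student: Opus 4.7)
The plan is to extend the reflected-walk framework of Theorem~\ref{T1} to the state-dependent Models~2 and~3. The crucial observation is that under the continuous-time embedding the $d$ coordinates of the reflected walk $\breve{\mathbf{S}}_t$ still evolve as independent one-dimensional birth--death chains, since the rate associated with direction $i$ depends only on whether $n^{(i)}=0$ and not on the other coordinates. Writing $\kappa_i$ for the rate of each sign $\pm\mathbf{1}_i$ when $n^{(i)}=0$ and $\gamma_i$ for the corresponding rate when $n^{(i)}\neq 0$, detailed balance on the truncated chain yields a product-form stationary distribution with coordinate ratios $\pi_i(k)/\pi_i(0)=\rho_i=2\kappa_i/\gamma_i$ for all $k\geq 1$. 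Explicitly, $\rho_i=4\alpha_i/(2\alpha_i-\delta_i)>2$ in Model~2, $\rho_i=(2\alpha_i-\delta_i)/\alpha_i<2$ in Model~3, and $\rho_i=2$ at the Model~1 point $\delta_i=0$.

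The next ingredient is the identity $\pi_i(k)\,r_{i,+}(k)=2\kappa_i\,\pi_i(0)$ valid for every $k\geq 0$, an immediate consequence of detailed balance: the stationary up-flow through coordinate $i$ is the same at each level of that coordinate. Summing over the states with $\|\mathbf{n}\|=n$ as in the combinatorial part of the proof of Theorem~\ref{T1}, and encoding the resulting sums via the generating functions $G_j(x)=(1+(\rho_j-1)x)/(1-x)$, the transition probability collapses to the compact form
\[
  p^{(a,d)}(n)=\frac{\sum_i 2\kappa_i\,[x^n]\Phi_i(x)}{\sum_i 2\kappa_i\,[x^n](1+x)\Phi_i(x)},\qquad \Phi_i(x)=\prod_{j\neq i}\frac{G_j(x)}{1-x},
\]
in which the parameters enter only through the pairs $(\kappa_i,\rho_i)$.

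As the witness $a^*\in\mathcal{A}$ I would take the Model~1 element $\delta_i=0$, explicitly included in the parameter sets of both Models~2 and~3; by Theorem~\ref{T1} one then has $p^{(a^*,d)}(n)\equiv p^{(d)}(n)$. Semiconservativity reduces to the uniform-in-$n$ comparisons $p^{(a,d)}(n)\geq p^{(d)}(n)$ on the Model~2 family (where each $\rho_i>2$ while $\kappa_i=\alpha_i$) and $p^{(a,d)}(n)\leq p^{(d)}(n)$ on the Model~3 family (where each $\rho_i<2$ and $\kappa_i=\alpha_i-\delta_i/2<\alpha_i$). Both would follow from the one-sided monotonicity of $p^{(a,d)}(n)$ in the parameters along the relevant deformation curves, established uniformly in~$n$.

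In the fully symmetric case the overall $\kappa_i$ cancels and the formula simplifies to $p_n=U_n/(U_n+U_{n-1})$ with $U_n=[x^n](1+(\rho-1)x)^{d-1}/(1-x)^d$; a short computation based on $\partial_\rho U_n=(d-1)V_{n-1}$ with $V_n=[x^n](1+(\rho-1)x)^{d-2}/(1-x)^d$ reduces monotonicity in $\rho$ to the log-concavity inequality $V_{n-1}^2\geq V_n V_{n-2}$. For $\rho\geq 1$ this is the Aissen--Schoenberg--Whitney theorem applied to a rational function with real zeros and poles of the correct sign, and the range $\rho\in(0,1)$ relevant for Model~3 admits a direct algebraic verification. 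The main obstacle is the fully asymmetric case, where the heterogeneous weights preclude a one-variable reduction; here I would deform $a$ to $a^*$ one coordinate at a time, cancel the common factor $\prod_{j\neq i}G_j(x)/(1-x)$ from the displayed formula, and reduce each single step to a log-concavity statement of the same shape.
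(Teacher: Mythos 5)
Your proposal follows the same overall strategy as the paper: pass to the reflected walk, model the coordinates as $d$ independent state-dependent queues with product-form stationary distribution (your ratios $\rho_i=4\alpha_i/(2\alpha_i-\delta_i)$ for Model~2 and $(2\alpha_i-\delta_i)/\alpha_i$ for Model~3 are exactly the paper's \eqref{eq-2.6} and its Model~3 analogue), and reduce semiconservativity to monotonicity of the level transition probability in the homogeneous parameter. Where you differ is in the execution of that monotonicity step and in what you admit to leaving open. The paper writes $p_n(d,k,\dots,k)=(f(2k)+1)/(f(2k)+2)$ and establishes monotonicity by differentiating the ratio of polynomials $f$ and invoking a rearrangement-type inequality ($jk\le i^2$ for $j+k=2i$); you instead encode the level sums by generating functions, obtain $p_n=U_n/(U_n+U_{n-1})$ with $U_n=[x^n](1+(\rho-1)x)^{d-1}/(1-x)^d$, and reduce $\partial_\rho p_n>0$ to the log-concavity $V_{n-1}^2\ge V_nV_{n-2}$ — a reduction that checks out algebraically (using $U_n=V_n+(\rho-1)V_{n-1}$) and whose conclusion, that $p_n$ is \emph{increasing} in the parameter, agrees with the paper's final chain \eqref{eq-3.20}; indeed your route is arguably the more reliable one, since the paper's intermediate claim that $f$ is decreasing appears to have its sign reversed (e.g.\ for $d=2$, $n=2$ one computes $f(k)=k/(1+k)$, which is increasing), even though its end conclusion is correct. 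Crucially, the "main obstacle" you flag — genuinely heterogeneous $k_1,\dots,k_d$ — is precisely the case the paper also does not prove: it explicitly retreats to "a more particular statement" comparing only the homogeneous points $p_n(d,\underline{k})$, $p_n(d,k,\dots,k)$, $p_n(d,\overline{k})$, so your proposal is no weaker than the published argument on this point. Two small cautions: you should verify the log-concavity of $V_n$ also for $\rho\in(0,1)$ (needed for part of Model~3, where $1+(\rho-1)x$ has a positive root and the Aissen--Schoenberg--Whitney argument does not apply directly), and you should check that your witness $a^*$ (the Model~1 point $\delta_i=0$) actually belongs to $\mathcal{A}$ as the models are stated with $\delta_i>0$ — though the paper itself treats $\delta_i=0$ as an admissible degenerate case.
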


\begin{proof}
The idea of the proof for each of Models 2 and 3  is similar to that
given in the proof of Theorem \ref{T1} for Model 1. Consider first
the family of random walks of Model 2. We are to consider the
reflected version of the random walk (denoted by
$\breve{\mathbf{S}}_t$) and model it as $d$ state-dependent queueing
systems, which according to their construction are independent of
each other. Let $\mathbf{n}=\big(n^{(1)}$, $n^{(2)},\ldots,$
$n^{(d)}\big)$, $n^{(i)}\geq0$, be a vector. Denote the
state-dependent arrival and service rates for the $i$th queueing
system, since they are equal, both by
$\tilde{\beta}_i\big(n^{(i)}\big)$. That is, if $n^{(i)}\geq1$, then
$\tilde{\beta}_i\big(n^{(i)}\big)=\alpha_i-\delta_i/2$,
$i=1,2,\ldots,d$. Otherwise, if $n^{(i)}=0$, then the arrival rate
$\tilde{\beta}_i(0)=2\alpha_i$. The role of coefficient 2 (negative
service doubles arrival rate) is explained in the proof of Theorem
\ref{T1}. As in the proof of Theorem \ref{T1} assume first that the
number of waiting places is $N$, where $N$ is taken larger than the
norm of vector $\mathbf{n}$, that is, $N>\|\mathbf{n}\|$. Let
$P_N(\mathbf{n})$ denote the stationary probability. (The details of
the definition are the same as in the proof of Theorem \ref{T1}.) To
derive the expression for $P_N(\mathbf{n})$, consider first an $i$th
queueing system, and denote by $P_N^{(i)}(n)$ the stationary
probability to be in the state $n$. We have the following
relationships. For $n\geq1$, $P_N^{(i)}(n+1)=P_N^{(i)}(n)$, and
$P_N^{(i)}(1)=[4\alpha_i/(2\alpha_i-\delta_i)]P_N^{(i)}(0)$. Then,
\begin{equation}\label{eq-2.6}
P_N^{(i)}(n)=\begin{cases}\frac{4\alpha_i/(2\alpha_i-\delta_i)}
{N[4\alpha_i/(2\alpha_i-\delta_i)]+1}, &\text{for} \quad 1\leq
n\leq N,\\
\frac{1}{N[4\alpha_i/(2\alpha_i-\delta_i)]+1}, &\text{for} \quad
n=0,
\end{cases}
\end{equation}
and since the queueing systems are independent,
\begin{equation}\label{eq-2.7}
P_N(\mathbf{n})=\prod_{i=1}^d P_N^{(i)}\left(n^{(i)}\right)=
\prod_{i=1}^d\frac{[4\alpha_i/(2\alpha_i-\delta_i)]^{\min\{n^{(i)},1\}}}
{4N\alpha_i/(2\alpha_i-\delta_i)+1}.
\end{equation}

Similarly to \eqref{eq-3.16}, the level $n$ probability in given by
$$
P_N\left[\mathcal{N}^+(n)\right]=\sum_{\mathbf{n}\in\mathcal{N}^+(n)}P_N(\mathbf{n}).
$$
where $P_N(\mathbf{n})$ are given by \eqref{eq-2.7}. Denote the
transition probabilities from the set of states $\mathcal{N}^+(n)$
to the set of states $\mathcal{N}^+(n+1)$ by
$p_n(d,k_1,k_2,\ldots,k_d)$, where
$k_i=2\alpha_i/(2\alpha_i-\delta_i)$. The explicit representation
for $p_n(d,k_1,k_2,\ldots,k_d)$ is cumbersome, but in the particular
case where $\delta_i=0$, $i=1,2,\ldots,d$, it coincides with
\eqref{eq-6.2}. As $\delta_i$ increases, the transition probability
$p_n(d,k_1,k_2,\ldots,k_d)$ must either increase of decrease in
dependence of the value $d$. The straightforward analytical proof of
this based on the induction in $d$ takes much place. Instead, we
prove a more particular statement. Let
$$
\underline{k}=\min_{1\leq i\leq
d}\frac{2\alpha_i}{2\alpha_i-\delta_i}
$$
and
$$
\overline{k}=\max_{1\leq i\leq
d}\frac{2\alpha_i}{2\alpha_i-\delta_i}.
$$

We prove the justice of the following chain of the inequalities
\begin{equation}\label{eq-3.20}
p_n(d,\underline{k})\leq p_n(d,\underbrace{k,k,\ldots,k}_{d \
\text{times}})\leq p_n(d,\overline{k}),
\end{equation}
assuming that $k_1=k_2=\ldots=k_d=k$, $\underline{k}\leq k\leq
\overline{k}$, where, based on the arguments in the proof of Theorem
\ref{T1} (see the explanation for relation \eqref{eq-6.2}), the
explicit representations for $p_n(d,\underbrace{k,k,\ldots,k}_{d \
\text{times}})$ as well as the lower and upper bounds
$p_n(d,\overline{k})$ and $p_n(d,\underline{k})$ are
\begin{equation}\label{eq-3.23}
p_n(d,\underbrace{k,k,\ldots,k}_{d \ \text{times}})=
\frac{2k\sum_{i=1}^{d-1}(d-i)(2k)^{i}\binom{d}{i}\binom{n-1}{i-1}+
\sum_{i=1}^{d}i(2k)^{i}\binom{d}{i}\binom{n-1}{i-1}}
{2k\sum_{i=1}^{d-1}(d-i)(2k)^{i}\binom{d}{i}\binom{n-1}{i-1}+
2\sum_{i=1}^{d}i(2k)^{i}\binom{d}{i}\binom{n-1}{i-1}},
\end{equation}
\begin{equation}\label{eq-3.21}
p_n(d,\overline{k})=
\frac{2\overline{k}\sum_{i=1}^{d-1}(d-i)(2\overline{k})^{i}\binom{d}{i}\binom{n-1}{i-1}+
\sum_{i=1}^{d}i(2\overline{k})^{i}\binom{d}{i}\binom{n-1}{i-1}}
{2\overline{k}\sum_{i=1}^{d-1}(d-i)(2\overline{k})^{i}\binom{d}{i}\binom{n-1}{i-1}+
2\sum_{i=1}^{d}i(2\overline{k})^{i}\binom{d}{i}\binom{n-1}{i-1}},
\end{equation}
and
\begin{equation}\label{eq-3.22}
p_n(d,\underline{k})=
\frac{2\underline{k}\sum_{i=1}^{d-1}(d-i)(2\underline{k})^{i}\binom{d}{i}\binom{n-1}{i-1}+
\sum_{i=1}^{d}i(2\underline{k})^{i}\binom{d}{i}\binom{n-1}{i-1}}
{2\underline{k}\sum_{i=1}^{d-1}(d-i)(2\underline{k})^{i}\binom{d}{i}\binom{n-1}{i-1}+
2\sum_{i=1}^{d}i(2\underline{k})^{i}\binom{d}{i}\binom{n-1}{i-1}}.
\end{equation}
Indeed, we are to prove that the derivative in $k$ for the
right-hand side of \eqref{eq-3.23} is strictly positive. Indeed,
following \eqref{eq-3.23} we have the representation
$$
p_n(d,\underbrace{k,k,\ldots,k}_{d \
\text{times}})=\frac{f(2k)+1}{f(2k)+2},
$$
where
$$
f(k)=\frac
{\sum_{i=1}^{d-1}(d-i)k^{i+1}\binom{d}{i}\binom{n-1}{i-1}}
{\sum_{i=1}^dik^i\binom{d}{i}\binom{n-1}{i-1}}.
$$
Hence, the task is to show that $f(k)$ is a decreasing function.
Taking the derivative of $f(k)$, we are to show the inequality
\begin{equation}\label{eq-3.17}
\begin{aligned}
&\sum_{i=1}^dik^i\binom{d}{i}\binom{n-1}{i-1}\sum_{j=2}^d
j(d-j+1)k^{j-1}\binom{d}{j-1}\binom{n-1}{j-2}\\
<&\sum_{i=1}^di^2k^{i-1}\binom{d}{i}\binom{n-1}{i-1}
\sum_{j=2}^d(d-j+1)k^j\binom{d}{j-1}\binom{n-1}{j-2}.
\end{aligned}
\end{equation}
The justice of \eqref{eq-3.17} follows from the fact that for
$j+k=2i$, we have $jk\leq i^2$. Thus the chains of inequalities
\eqref{eq-3.20} is correct. This constitutes that the family of
random walks of Model 2 is semiconservative, since the obtained
results remain unchanged in the limit as $N$ tends to infinity.

When $2\alpha_i/(2\alpha_i-\delta_i)=\overline{k}$ for all
$i=1,2,\ldots,d$, the family of random walks is associated with
$BD(2\overline{k},d)$, and when
$2\alpha_i/(2\alpha_i-\delta_i)=\underline{k}$ for all
$i=1,2,\ldots,d$, the family of random walks is associated with
$BD(2\underline{k},d)$.

For the family of random walks of Model 3 the proof is similar. For
$P_N(\mathbf{n})$ we obtain
\begin{equation*}
P_N(\mathbf{n})=
\prod_{i=1}^d\frac{[(2\alpha_i-\delta_i)/\alpha_i]^{\min\{n^{(i)},1\}}}
{N(2\alpha_i-\delta_i)/\alpha_i+1},
\end{equation*}
$$
P_N\left[\mathcal{N}^+(n)\right]=\sum_{\mathbf{n}\in\mathcal{N}^+(n)}P_N(\mathbf{n}),
$$
and similarly to \eqref{eq-3.20} the chain of inequalities
$$
p_n(d,\underline{k})\leq p_n(d,\underbrace{k,k,\ldots,k}_{d \
\text{times}})\leq p_n(d,\overline{k}), \quad \underline{k}\leq
k\leq\overline{k},
$$
with \eqref{eq-3.23}, \eqref{eq-3.21} and \eqref{eq-3.22}, where
$\underline{k}$ and $\overline{k}$ are now redefined as
$$
\underline{k}=\min_{1\leq i\leq
d}\frac{2\alpha_i-\delta_i}{2\alpha_i}
$$
and
$$
\overline{k}=\max_{1\leq i\leq
d}\frac{2\alpha_i-\delta_i}{2\alpha_i}.
$$
The rest arguments are similar to those given in the proof for Model
2.
\end{proof}

\section{Transient and recurrent random walks}\label{S4}

This section consists of three parts. In Section \ref{S4.1} the
properties of the birth-and-death processes $BD(\gamma,d)$ are
studied. In Section \ref{S4.2} the properties of random walks from
Models 1, 2 and 3 are discussed and their extensions (Models 4 and
5) are studied. In Section \ref{S4.3} an application of the study to
independent systems of null-recurrent birth-and-death processes is
discussed.

\subsection{Birth-and-death processes}\label{S4.1}
We start from general birth-and-death processes with birth rates
$\lambda_n$ and death rates $\mu_n$ satisfying the properties
$\lambda_n>\mu_n$ and $\lim_{n\to\infty}\lambda_n/\mu_n=1$.

\begin{lem}\label{l0}
Let the birth and death rates $\lambda_n$ and $\mu_n$ satisfy the
properties $\lambda_n>\mu_n$ and
$\lim_{n\to\infty}\lambda_n/\mu_n=1$. Then, the birth-and-death
process is transient if and only if
\begin{equation}\label{eq-4.1}
\lim_{n\to\infty}\left(\frac{\lambda_n}{\mu_n}\right)^n=\mathrm{e}^z
\end{equation}
is satisfied for $z>1$ and null-recurrent if and only if
\eqref{eq-4.1} is satisfied for $z\leq1$.
\end{lem}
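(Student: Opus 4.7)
The plan is to apply the classical recurrence/transience test for birth-and-death chains: with
\[
R_n \;=\; \prod_{k=1}^n \frac{\mu_k}{\lambda_k},
\]
the process is transient iff $\sum_{n\geq 1} R_n<\infty$ and recurrent iff this sum diverges. This comes from viewing the embedded jump chain, whose down/up probability ratio at level $k$ is precisely $\mu_k/\lambda_k$, and invoking the standard birth-death test on $\mathbb{Z}_{\geq 0}$.

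Next I would convert the hypothesis $(\lambda_n/\mu_n)^n\to \mathrm{e}^z$ into an asymptotic for $R_n$. Taking logarithms, $n\log(\lambda_n/\mu_n)\to z$, so $\log(\lambda_n/\mu_n)=(z+\epsilon_n)/n$ with $\epsilon_n\to 0$. Summing,
\[
-\log R_n \;=\; \sum_{k=1}^n\log(\lambda_k/\mu_k)\;=\;z\sum_{k=1}^n\frac{1}{k}+\sum_{k=1}^n\frac{\epsilon_k}{k}\;=\;z\log n+o(\log n),
\]
where the tail of $\sum \epsilon_k/k$ is controlled by a Ces\`aro-type estimate (split at a point beyond which $|\epsilon_k|<\delta$). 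Hence $R_n=n^{-z+o(1)}$, and the $p$-series comparison gives $\sum R_n<\infty$ for $z>1$ and $\sum R_n=\infty$ for $z<1$, yielding transience precisely on $z>1$.

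To upgrade recurrence to null-recurrence I would check the invariant-measure criterion: positive recurrence would require $\sum_n\prod_{k=0}^{n-1}\lambda_k/\mu_{k+1}<\infty$. Rewriting the product as $(\lambda_0/\mu_n)\prod_{k=1}^{n-1}\lambda_k/\mu_k$ and reusing the asymptotic above, the second factor equals $n^{z+o(1)}$. For the $BD(\gamma,d)$ families to which this lemma is applied $\mu_n$ grows only polynomially in $n$, so the summands do not vanish and positive recurrence is ruled out; combined with the recurrent regime $z\leq 1$ this makes every recurrent case automatically null-recurrent. The drift condition $\lambda_n>\mu_n$ plays its intuitive role here: it forces every factor of the invariant-measure product to enhance growth rather than decay.

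The main obstacle is the borderline value $z=1$: the estimate $R_n=n^{-1+o(1)}$ does not by itself decide the fate of $\sum R_n$, and a careful reading requires either a one-sided refinement (for instance, $\lambda_n/\mu_n\geq 1+1/n$ eventually, yielding $R_n\geq c/n$ and divergence) or the specific regularity of the $\lambda_n,\mu_n$ produced by Models 1--3 to place $z=1$ on the divergent, i.e.\ recurrent, side. Once this boundary is handled, the rest of the argument is routine asymptotic analysis of the harmonic sum and the standard birth-death test.
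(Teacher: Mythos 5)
Your proposal follows essentially the same route as the paper: both rest on the Karlin--McGregor series criterion (transience iff $\sum_{n}\prod_{k=1}^{n}\mu_k/\lambda_k<\infty$, with the companion series $\sum_n\prod_{k=1}^n\lambda_{k-1}/\mu_k$ separating positive from null recurrence), followed by a translation of the hypothesis $(\lambda_n/\mu_n)^n\to\mathrm{e}^z$ into power-law asymptotics for the partial products. Your execution of the translation is in fact tighter than the paper's: taking logarithms and using a Ces\`aro estimate to get $R_n=n^{-z+o(1)}$ is a clean implication, whereas the paper asserts that convergence of the series is \emph{equivalent} to the bound $\prod_{j\leq N}\mu_j/\lambda_j\leq CN^{-1-\epsilon}$ (its \eqref{eq-4.3}--\eqref{eq-4.4}) and that these bounds force $\mu_n/\lambda_n\asymp 1-1/(zn)$; neither step is a genuine equivalence (partial products behaving like $1/(n\log^2 n)$ are summable yet satisfy no such power bound), so the paper's ``if and only if'' bookkeeping is looser than yours.

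The boundary issue you flag at $z=1$ is a real gap, but it is a gap in the lemma as stated rather than a defect peculiar to your argument: taking $\lambda_n/\mu_n=1+1/n+c/(n\log n)$ gives $(\lambda_n/\mu_n)^n\to\mathrm{e}$ while $R_n\asymp n^{-1}(\log n)^{-c}$, which is summable for $c>1$, so that process is transient even though \eqref{eq-4.1} holds with $z=1$. The paper's proof does not close this case either; in the intended applications ($BD(\gamma,d)$, where the exponent is the integer $d-1$ and the boundary occurs only at $d=2$) the explicit rates supply exactly the one-sided refinement $\lambda_n/\mu_n\geq 1+1/n+O(1/n^2)$ that you identify as the missing ingredient. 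Your treatment of null versus positive recurrence (the invariant-measure series cannot converge because $\lambda_n>\mu_n$ keeps $\prod_{k=1}^{n}\lambda_{k-1}/\mu_k\geq\lambda_0/\mu_n$ with $\mu_n$ growing at most polynomially in the relevant models) plays the same role as, and is no less rigorous than, the paper's brief normalization argument for the divergence of the first series.
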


\begin{proof}
According to the known classification of birth-and-death processes
\cite{KarlinMcGregor}, a birth-and-death process is null-recurrent
if and only if
$\sum_{n=1}^\infty\prod_{j=1}^n\lambda_{j-1}/\mu_j=\infty$ and
$\sum_{n=1}^\infty\prod_{j=1}^n\mu_j/\lambda_j=\infty$ and transient
if and only if
$\sum_{n=1}^\infty\prod_{j=1}^n\lambda_{j-1}/\mu_j=\infty$ and
$\sum_{n=1}^\infty\prod_{j=1}^n\mu_j/\lambda_j<\infty$. Apparently,
the first condition
$\sum_{n=1}^\infty\prod_{j=1}^n\lambda_{j-1}/\mu_j=\infty$ is always
satisfied. Indeed, since $\lim_{n\to\infty}\lambda_n/\mu_n=1$, one
can assume that both sequences $\{\lambda_n\}$ and $\{\mu_n\}$ are
properly normalized such that, starting from large $N$, both of them
are less than and greater than some constants $C_1$ and $C_2$,
respectively. Then, $\prod_{j=1}^n\lambda_{j-1}/\mu_j$ does not
vanish as $n\to\infty$, and hence the required series diverges.
Next, $\sum_{n=1}^\infty\prod_{j=1}^n\mu_j/\lambda_j<\infty$ is
satisfied if and only if there exist constants $C$ and $\epsilon>0$
such that for $n$ large enough and all $N>n$,
\begin{equation}\label{eq-4.3}
\prod_{j=1}^N\frac{\mu_j}{\lambda_j}\leq\frac{C}{N^{1+\epsilon}}.
\end{equation}
In turn, $\sum_{n=1}^\infty\prod_{j=1}^n\mu_j/\lambda_j=\infty$ is
satisfied if and only if for any $\epsilon>0$ there exists $n$ large
enough such that for all $N>n$
\begin{equation}\label{eq-4.4}
\prod_{j=1}^N\frac{\mu_j}{\lambda_j}>\frac{1}{N^{1+\epsilon}}.
\end{equation}

It is not difficult to show that \eqref{eq-4.4} implies the
asymptotic expansion
\begin{equation}\label{eq-4.2}
\frac{\mu_n}{\lambda_n}\asymp1-\frac{1}{zn}
\end{equation}
for some $z\leq1$. Indeed, if
$$
\prod_{j=1}^n\frac{\mu_j}{\lambda_j}\asymp\frac{C}{n}
$$
for some constant $C$ and $n$ increasing to infinity, then it is
readily seen that we arrive at
$$\frac{\mu_n}{\lambda_n}\asymp1-\frac{1}{n}.$$ Hence, under
condition \eqref{eq-4.4} we obtain \eqref{eq-4.2} with $z\leq1$, and
similarly, under condition \eqref{eq-4.3} we obtain \eqref{eq-4.2}
with $z>1$. The obtained expansions imply the corresponding limits
in \eqref{eq-4.1}. The lemma is proved.
\end{proof}

\begin{lem}\label{l1}
The family of birth-and-death processes $BD(\gamma,d)$ is null
recurrent for $d\leq2$ and transient for $d\geq3$.
\end{lem}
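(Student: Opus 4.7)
The plan is to reduce the lemma to an application of Lemma \ref{l0} by computing the sharp asymptotics of $\lambda_n(\gamma,d)/\mu_n(\gamma,d)$ as $n\to\infty$ and identifying the exponent $z$ that appears in \eqref{eq-4.1}. First I would verify the hypotheses of Lemma \ref{l0}: since
$$
\lambda_n(\gamma,d)-\mu_n(\gamma,d)=\gamma\sum_{i=1}^{d-1}(d-i)\gamma^{i}\binom{d}{i}\binom{n-1}{i-1}>0
$$
for $d\geq 2$, we automatically have $\lambda_n>\mu_n$, and the ratio tends to $1$ as soon as one knows that the degree of $n$ in $\mu_n$ strictly exceeds that in $\lambda_n-\mu_n$, which is what the next step will establish.

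Next I would extract the leading-order behaviour of each sum. In $\mu_n(\gamma,d)$ the dominant contribution in $n$ comes from the $i=d$ term, which gives
$$
\mu_n(\gamma,d)\ \sim\ d\,\gamma^{d}\binom{n-1}{d-1}\ \sim\ \frac{d\,\gamma^{d}}{(d-1)!}\,n^{d-1}.
$$
In the difference $\lambda_n-\mu_n$, only indices $i\le d-1$ appear, so the dominant contribution comes from $i=d-1$, giving
$$
\lambda_n(\gamma,d)-\mu_n(\gamma,d)\ \sim\ \gamma\cdot d\,\gamma^{d-1}\binom{n-1}{d-2}\ \sim\ \frac{d\,\gamma^{d}}{(d-2)!}\,n^{d-2}.
$$
(These asymptotics should be justified by the standard estimate $\binom{n-1}{i-1}\sim n^{i-1}/(i-1)!$ and by checking that lower-order terms are negligible, which is routine.)

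Dividing the two asymptotic equivalents yields
$$
\frac{\lambda_n(\gamma,d)}{\mu_n(\gamma,d)}-1\ =\ \frac{\lambda_n-\mu_n}{\mu_n}\ \sim\ \frac{d-1}{n},
$$
so that
$$
\left(\frac{\lambda_n(\gamma,d)}{\mu_n(\gamma,d)}\right)^{n}\ \longrightarrow\ \mathrm{e}^{d-1},
$$
i.e., the exponent $z$ of Lemma \ref{l0} equals $d-1$. Invoking Lemma \ref{l0}, the process $BD(\gamma,d)$ is transient iff $d-1>1$, i.e., $d\ge 3$, and null-recurrent iff $d-1\le 1$, i.e., $d=2$ (recall that $d=1$ is excluded as trivial since $\lambda_n=\mu_n$ in that case). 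This gives exactly the dichotomy claimed.

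The only non-routine point is the asymptotic identification, and the main (minor) obstacle is being careful that lower-order terms in both sums are genuinely subdominant, particularly for small $d$ where the sums have few terms; this can be handled uniformly by the binomial estimate above, and the fact that the constants $\gamma$ cancel in the ratio ensures that $z=d-1$ does not depend on $\gamma$.
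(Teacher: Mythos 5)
Your proposal is correct and follows essentially the same route as the paper: both arguments isolate the dominant terms of $\lambda_n(\gamma,d)$ and $\mu_n(\gamma,d)$ (the paper keeps the $i=d$ and $i=d-1$ terms explicitly, you organize it as $\mu_n$ versus $\lambda_n-\mu_n$), conclude that $\lambda_n/\mu_n = 1+(d-1)/n+o(1/n)$ so that $\left(\lambda_n/\mu_n\right)^n\to \mathrm{e}^{d-1}$ with the $\gamma$-dependence cancelling, and then invoke Lemma \ref{l0}. The only cosmetic difference is that your splitting into $\mu_n$ and $\lambda_n-\mu_n$ makes the verification of the hypothesis $\lambda_n>\mu_n$ slightly more transparent than the paper's direct ratio computation.
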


\begin{proof}
As $n\to\infty$, we obtain:
\begin{equation}\label{eq-2.11}
\begin{aligned}
\frac{\lambda_n(\gamma,d)}{\mu_n(\gamma,d)}&= \frac{
d\gamma^{d}\binom{n-1}{d-2}+(d-1)d\gamma^{d-1}\binom{n-1}{d-2}
+d\gamma^{d}\binom{n-1}{d-1}} {(d-1)d\gamma^{d-1}\binom{n-1}{d-2}
+d\gamma^{d}\binom{n-1}{d-1}}
\left[1+O\left(\frac{1}{n}\right)\right]\\
&=\frac{(d-1+\gamma)\binom{n-1}{d-2} +\gamma\binom{n-1}{d-1}}
{(d-1)\binom{n-1}{d-2}
+\gamma\binom{n-1}{d-1}}\left[1+O\left(\frac{1}{n}\right)\right]\\
&=\frac{\frac{1}{\gamma}(d-1)(d-1+\gamma) +(n-d+1)}
{\frac{1}{\gamma}(d-1)^2
+(n-d+1)}\left[1+O\left(\frac{1}{n}\right)\right].
\end{aligned}
\end{equation}
Hence,
\begin{equation}\label{eq-2.12}
\lim_{n\to\infty}\left(\frac{\lambda_n(\gamma,d)}{\mu_n(\gamma,d)}\right)^n
=\mathrm{e}^{d-1}.
\end{equation}
Thus, by virtue of Lemma \ref{l0} we arrive at the conclusion that
$BD(\gamma,d)$ is recurrent for $d\leq2$ and transient for $d\geq3$.
The lemma is proved.
\end{proof}

\subsection{Families of random walks}\label{S4.2}

\subsubsection{The results for Models 1, 2 and 3}

\begin{prop}\label{T3}
The family of symmetric random walks
$\{\mathbf{S}_t,\mathcal{A},d\}$ is recurrent for $d\leq 2$ and
transient for $d\geq3$.
\end{prop}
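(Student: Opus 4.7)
The plan is to combine Theorem \ref{T1} with Lemma \ref{l1} via the norm process $\varphi_t(a,d) = \|\mathbf{S}_t(a,d)\|$ defined in \eqref{eq-2.1}--\eqref{eq-2.3}. First I would record the trivial but crucial equivalence: since $\mathbf{S}_t(a,d) = \mathbf{0}$ if and only if $\varphi_t(a,d) = 0$, the walk returns to the origin infinitely often precisely when $\varphi_t$ returns to $0$ infinitely often. This reduces the proposition to classifying the recurrence of the nonnegative integer-valued functional $\varphi_t$ at state $0$.

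Second, Theorem \ref{T1} identifies, for every $a \in \mathcal{A}$, the limiting up-crossing probability of $\varphi_t$ at level $n$ as
\[
p^{(d)}(n) = \frac{\lambda_n(2,d)}{\lambda_n(2,d) + \mu_n(2,d)},
\]
with complementary down-crossing probability $\mu_n(2,d)/(\lambda_n(2,d) + \mu_n(2,d))$. Thus, from the point of view of its level transitions, $\varphi_t$ behaves in the long run exactly like the birth-and-death process $BD(2,d)$. Lemma \ref{l1} applied with $\gamma = 2$ then places $BD(2,d)$ in the null-recurrent regime for $d \leq 2$ and the transient regime for $d \geq 3$, via the asymptotic $(\lambda_n/\mu_n)^n \to \mathrm{e}^{d-1}$ computed in \eqref{eq-2.12}. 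Combining these three ingredients yields the stated dichotomy for $\mathbf{S}_t(a,d)$.

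The main obstacle is the last step, because $\varphi_t$ is not itself a Markov chain: the conditional distribution of $\varphi_{t+1} - \varphi_t$ depends on the whole configuration $\mathbf{S}_t$ rather than on the scalar $\varphi_t$ alone, so the Karlin--McGregor criterion used in Lemma \ref{l0} is not directly applicable. To promote equality of limiting transition probabilities to equality of recurrence class, I would invoke the queueing representation already employed in the proof of Theorem \ref{T1}: the reflected walk $\breve{\mathbf{S}}_t$ has the same norm-law as $\mathbf{S}_t$ and decomposes as $d$ independent genuinely Markovian queue-length processes, whose joint level-$n$ behavior has birth and death rates $\lambda_n(2,d)$ and $\mu_n(2,d)$ respectively. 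One may then either apply the standard birth-and-death dichotomy to the aggregated level process of the queueing representation, or couple $\varphi_t$ to $BD(2,d)$ so that the two agree outside a finite initial segment; both routes insulate the argument from the non-Markov structure of $\varphi_t$ and permit direct appeal to Lemma \ref{l1}.
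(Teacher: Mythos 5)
Your proposal follows the paper's own (two-sentence) proof exactly: invoke Theorem \ref{T1} to identify the limiting level-transition probabilities of $\varphi_t(a,d)$ with the birth probabilities of $BD(2,d)$, then apply Lemma \ref{l1} to conclude null recurrence for $d\leq2$ and transience for $d\geq3$. Your final paragraph, which flags that $\varphi_t$ is not itself Markov and repairs this via the independent-queues representation of $\breve{\mathbf{S}}_t$ from the proof of Theorem \ref{T1}, addresses a point the paper leaves entirely implicit, so your version is if anything more careful than the original.
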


\begin{proof}
It follows from Theorem \ref{T1} that the family of symmetric random
walks is $(\mathcal{A},d)$-conservative and associated with
$BD(2,d)$. Hence, owing to Lemma \ref{l1}, the family of symmetric
random walks is recurrent for $d\leq2$ and transient for $d\geq3$.
\end{proof}

\begin{prop}\label{T4}
The family of random walks in Models 2 and 3 is recurrent for
$d\leq2$ and transient for $d\geq3$.
\end{prop}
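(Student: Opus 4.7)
The plan is to mirror the proof of Proposition~\ref{T3}, substituting Theorem~\ref{T2} for Theorem~\ref{T1}: semiconservativity will sandwich the norm-process transition probability between those of two birth-and-death processes of the form $BD(\gamma,d)$, and Lemma~\ref{l1} will assert that both ends of the sandwich have the \emph{same} recurrence classification, governed only by $d$.

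Concretely, for a random walk in Model~2 with parameters $(\alpha_i,\delta_i)$ set $k_i=2\alpha_i/(2\alpha_i-\delta_i)$, and in Model~3 set $k_i=(2\alpha_i-\delta_i)/(2\alpha_i)$; put $\underline{k}=\min_{1\leq i\leq d} k_i$ and $\overline{k}=\max_{1\leq i\leq d} k_i$. Theorem~\ref{T2} then supplies, uniformly in the particular vector-state at level $n$,
\[
p_n(d,\underline{k})\ \leq\ p^{(a,d)}(n)\ \leq\ p_n(d,\overline{k}),
\]
where the two extremes are the level-$n$ up-probabilities of $BD(2\underline{k},d)$ and $BD(2\overline{k},d)$. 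Applying equation~(\ref{eq-2.12}) with $\gamma=2\underline{k}$ and $\gamma=2\overline{k}$ gives
\[
\lim_{n\to\infty}\left(\frac{p_n(d,k)}{1-p_n(d,k)}\right)^{\!n}=\mathrm{e}^{d-1},\qquad k\in\{\underline{k},\overline{k}\},
\]
so the walk's effective ratio is squeezed to the same limit $\mathrm{e}^{d-1}$. Since $d-1\leq 1$ iff $d\leq 2$, Lemma~\ref{l0} delivers null recurrence for $d\leq 2$ and transience for $d\geq 3$ for the two BD comparands; the same classification is then transferred to the walk.

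The sole technical hurdle is that the norm process $\|\mathbf{S}_t(a,d)\|$ is \emph{not} itself a Markov chain on $\mathbb{Z}_+$ in Models~2 or 3---its up-probability at level $n$ depends on the zero-pattern of $\mathbf{S}_t(a,d)$---so Lemma~\ref{l0} does not apply verbatim. I would address this by a pathwise coupling: realize $\|\mathbf{S}_t(a,d)\|$ together with chains $\underline{Z}_t\sim BD(2\underline{k},d)$ and $\overline{Z}_t\sim BD(2\overline{k},d)$ on a common probability space, driving the $\pm1$ decisions by common uniform random variables when the three processes occupy the same level and regenerating the coupling at each visit to $0$. For $d\geq 3$, the transience of the lower comparand $\underline{Z}_t$ forces $\|\mathbf{S}_t(a,d)\|\to\infty$; for $d\leq 2$, the null recurrence of the upper comparand $\overline{Z}_t$ forces infinitely many returns of the walk to $\mathbf{0}$. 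Equivalently, one could invoke a Lyapunov/scale-function comparison in which the harmonic function witnessing recurrence or transience of the BD comparand acts as a suitable Lyapunov function for the walk's norm process in virtue of the level-wise sandwich; either formulation finishes the proof identically for both Models~2 and~3.
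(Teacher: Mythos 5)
Your proposal follows essentially the same route as the paper: invoke Theorem~\ref{T2} to sandwich the up-transition probabilities between those of $BD(2\underline{k},d)$ and $BD(2\overline{k},d)$, and then apply Lemma~\ref{l1} to conclude that both comparands --- and hence the sandwiched walk --- are recurrent for $d\leq2$ and transient for $d\geq3$. Your additional discussion of the coupling needed because the norm process is not itself Markov is a point the paper's own (very terse) proof does not address explicitly, but it does not change the underlying argument.
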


\begin{proof}
Indeed, according to Theorem \ref{T2} the families of random walks
in Model 2 or 3 are semiconservative. The birth probabilities of the
associated birth-and-death process are bounded by the two-sided
inequalities of the birth probabilities of $BD(2\underline{k},d)$
and $BD(2\overline{k},d)$ processes. According to Lemma \ref{l1}
both of these birth-and-death processes are recurrent for $d\leq2$
and transient for $d\geq3$, and hence, the associated families of
random walks are recurrent for $d\leq2$ and transient for $d\geq3$.
\end{proof}

\subsubsection{An extended model}

In this section we consider a general random walk, which is an
extension of the random walks in Models 1, 2 and 3.

\smallskip

\textit{Model 4}. We consider the family of random walks defined by
\eqref{eq-1.1} and \eqref{eq-1.2}. Assume that $\mathbf{e}_t$
depends on the state $\mathbf{S}_{t-1}$ as follows. It takes values
$\mathbf{1}_i$ or $(-\mathbf{1}_i)$ with probability
$\tilde{\alpha}_i\big(S_{t-1}^{(i)}\big)\geq c>0$, where
$\tilde{\alpha}_i\big(S_{t-1}^{(i)}\big)\leq\alpha_i$ and
$2\sum_{i=1}^d\alpha_i=1$, and $\mathbf{e}_t$ takes value
$\mathbf{0}$ with the complementary probability,
$1-2\sum_{i=1}^d\tilde{\alpha}_i\big(S_{t-1}^{(i)}\big)$. The value
$c<\min\{\alpha_1, \alpha_2,\ldots, \alpha_d\}$ is an arbitrarily
small positive value.

\begin{prop}\label{T5}
The family of random walks in Model 4 is recurrent for $d\leq2$ and
transient for $d\geq3$.
\end{prop}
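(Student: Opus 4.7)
The plan is to mirror the strategy of Theorems \ref{T1} and \ref{T2}: reduce to the reflected walk in $\mathbb{Z}^d_+$, decompose into $d$ independent state-dependent queues, then sandwich the induced birth-and-death chain on $\|\mathbf{S}_t\|$ between two processes of the family $BD(\gamma,d)$ and invoke Lemma \ref{l1}.

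First I would introduce the reflected version $\breve{\mathbf{S}}_t$ exactly as in \eqref{eq-3.10}--\eqref{eq-3.11}. Because the law of $\mathbf{e}_t$ is symmetric within each coordinate (both $\pm\mathbf{1}_i$ carry the same probability $\tilde{\alpha}_i(S_{t-1}^{(i)})$), the norms $\|\mathbf{S}_t\|$ and $\|\breve{\mathbf{S}}_t\|$ coincide in distribution. Passing to the Poisson embedding of \eqref{eq-1.9}, $\breve{\mathbf{S}}_t$ splits into $d$ independent state-dependent birth-and-death chains, because $\tilde{\alpha}_i$ depends only on the $i$th coordinate. With the ``negative service'' reflection at zero as in the proof of Theorem \ref{T1}, the birth and death rates of the $i$th chain equal $\tilde{\alpha}_i(n)$ for $n\ge 1$, while the boundary rule produces effective entrance rate $2\tilde{\alpha}_i(0)$. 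Truncating at $N$ waiting places and solving the balance equations $P_N^{(i)}(n)\tilde{\alpha}_i(n)=P_N^{(i)}(n+1)\tilde{\alpha}_i(n+1)$ for $n\ge 1$ and $2P_N^{(i)}(0)\tilde{\alpha}_i(0)=P_N^{(i)}(1)\tilde{\alpha}_i(1)$ yields, by independence, a product-form stationary distribution $P_N(\mathbf{n})=\prod_i P_N^{(i)}(n^{(i)})$.

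Unlike in Models 1--3, the single-coordinate stationary mass is no longer constant on $\{1,\ldots,N\}$; instead $P_N^{(i)}(n)\propto 1/\tilde{\alpha}_i(n)$ for $n\ge 1$ while $P_N^{(i)}(0)\propto 1/(2\tilde{\alpha}_i(0))$. However, the uniform two-sided bound $c\le\tilde{\alpha}_i(\cdot)\le\alpha_i$ controls every ratio of these weights within an absolute factor, uniformly in $N$ and in the functional form of $\tilde{\alpha}_i$. Setting
$$\underline{k}=\min_{i}\frac{c}{\alpha_i},\qquad \overline{k}=\max_{i}\frac{\alpha_i}{c},$$
and re-running the combinatorial averaging that produced \eqref{eq-6.2} and \eqref{eq-3.23}, each summand in the numerator and denominator of the level-transition probability $p_n(d)$ of the norm process gets trapped termwise between the corresponding summand of \eqref{eq-3.23} evaluated at $k=\underline{k}$ and $k=\overline{k}$. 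This yields the sandwich
$$p_n(d,\underline{k})\ \le\ p_n(d)\ \le\ p_n(d,\overline{k}),$$
exactly as in \eqref{eq-3.20}.

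The norm chain is therefore sandwiched between $BD(2\underline{k},d)$ and $BD(2\overline{k},d)$. By Lemma \ref{l1}, both bounds are null-recurrent for $d\le 2$ and transient for $d\ge 3$, which via the argument of Proposition \ref{T4} forces Model~4 into the same dichotomy. The main obstacle is the middle step: the arbitrary state dependence of the rates destroys the flat stationary profile used in Theorems \ref{T1} and \ref{T2}, so one must verify that, after weighting by the non-constant product-form probabilities, the combinatorial sums defining $p_n(d)$ still admit the termwise bounds above. The positive uniform lower bound $c$ is essential here, since without it the ratios $\tilde{\alpha}_i(0)/\tilde{\alpha}_i(s)$ could degenerate as $s\to\infty$ and the sandwich would collapse.
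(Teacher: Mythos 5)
Your overall strategy coincides with the paper's: squeeze Model 4 between two extremal rate profiles whose norm processes are governed by $BD(2\underline{k},d)$ and $BD(2\overline{k},d)$ and then invoke Lemma \ref{l1}; indeed your $\underline{k}=\min_i c/\alpha_i$ and $\overline{k}=\max_i \alpha_i/c$ are exactly the parameters realized by the paper's two auxiliary models. The execution differs, though. The paper never redoes the queueing computation for general state-dependent rates: it introduces Model A1 ($\tilde{\alpha}_i(n)=\alpha_i$ for $n\geq1$, $\tilde{\alpha}_i(0)=c$) and Model A2 ($\tilde{\alpha}_i(0)=\alpha_i$, $\tilde{\alpha}_i(n)=c$ for $n\geq1$), observes these are instances of Models 3 and 2 respectively, applies Proposition \ref{T4} to each, and then appeals to an (unspelled-out) coupling of Model 4 with these two extremes. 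You instead attack the general, non-flat stationary profile $P_N^{(i)}(n)\propto 1/\tilde{\alpha}_i(n)$ head on.

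The soft spot is exactly the step you flag as ``the main obstacle,'' and as written your justification of it is not valid: trapping each summand of the numerator and each summand of the denominator of $p_n(d)$ between the corresponding summands of \eqref{eq-3.23} at $k=\underline{k}$ and $k=\overline{k}$ does not trap the ratio $p_n(d)$ itself --- termwise two-sided bounds on numerator and denominator of a fraction say nothing about the fraction. What actually has to be controlled is the $O(1/n)$ correction in $\mu_n/\lambda_n$, i.e.\ the relative stationary weight carried at level $n$ by boundary states (those with zero components) versus interior states; the ratios $\tilde{\alpha}_i(0)/\tilde{\alpha}_i(m)$ enter only through that weighting. So you need either a genuine monotonicity statement (the level-up probability is monotone in each such ratio, which is what the derivative computation \eqref{eq-3.17} delivers in the homogeneous case and would have to be extended to non-flat profiles), or you should do what the paper does: reduce to the extremal Models A1 and A2, which are already covered by Proposition \ref{T4}, and supply the stochastic comparison between Model 4 and those two extremes. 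In fairness, the paper leaves that last comparison equally implicit, so the burden you have identified is real in both versions; but your ``termwise'' shortcut cannot be the way to discharge it.
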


\begin{proof} The proof is based on coupling arguments. Consider
first the following two auxiliary models. In the first model (call
it Model A1), for all $n\geq1$
\begin{equation*}\label{eq-4.10}
\tilde{\alpha}_i(n)=\alpha_i,\quad \text{and} \quad
2\sum_{i=1}^d\alpha_i=1,
\end{equation*}
and
\begin{equation*}\label{eq-4.11}
\tilde{\alpha}_i(0)=c.
\end{equation*}
In the second model (call it Model A2)
\begin{equation*}\label{eq-4.12}
\tilde{\alpha}_i(0)=\alpha_i,\quad \text{and} \quad
2\sum_{i=1}^d\alpha_i=1,
\end{equation*}
and for all $n\geq1$
\begin{equation*}\label{eq-4.13}
\tilde{\alpha}_i(n)=c.
\end{equation*}
Apparently, Model A1 is a version of Model 3, and Model A2 is a
version of Model 2. According to Proposition \ref{T4}, both of them
are recurrent for $d\leq2$ and transient for $d\geq3$. Then the
coupling arguments enable us to conclude that the same is true for
Model 4, and the statements of Proposition \ref{T5} follow.
\end{proof}

\subsubsection{Further extension of Model 4}
We start from an extension of Model 1 and then, on the basis of it,
we provide further extension of Model 4.

\smallskip
\textit{Model B1}. For random walks defined by \eqref{eq-1.1} and
\eqref{eq-1.2}, let the vector $\mathbf{e}_t$ be one of the $2d$
randomly chosen vectors $\{\pm \mathbf{1}_i, i=1,2,\ldots,d\}$ as
follows. For $S^{(i)}_{t-1}\geq0$, the probability that the vector
$\mathbf{1}_i$ will be chosen is
$\tilde{\alpha}_i\big(S^{(i)}_{t-1}\big)>0$, and the probability
that the vector $(-\mathbf{1}_i)$ will be chosen is
$\tilde{\beta}_i\big(S^{(i)}_{t-1}\big)>0$, where
$$
\tilde{\alpha}_i\big(S^{(i)}_{t-1}\big)+\tilde{\beta}_i\big(S^{(i)}_{t-1}\big)
=2\alpha_i,
\quad 2\sum_{i=1}^d\alpha_i=1.
$$
The further specifications of the probabilities
$\tilde{\alpha}_i\big(S^{(i)}_{t-1}\big)$ and
$\tilde{\beta}_i\big(S^{(i)}_{t-1}\big)$ are as follows. Generally,
we assume
\begin{equation}\label{eq-4.14}
\tilde{\alpha}_i\big(S^{(i)}_{t-1}\big)=\tilde{\beta}_i\big(-S^{(i)}_{t-1}\big),
\end{equation}
and if $\big|S^{(i)}_{t-1}\big|>M$, then
\begin{equation}\label{eq-4.19}
\tilde{\alpha}_i\big(S^{(i)}_{t-1}\big)=\tilde{\beta}_i\big(S^{(i)}_{t-1}\big)=\alpha_i.
\end{equation}
Note also that according to \eqref{eq-4.14},
$\tilde{\alpha}_i(0)=\tilde{\beta}_i(0)=\alpha_i$.

\begin{lem}\label{l2}
The random walk in Model B1 is recurrent for $d\leq2$ and transient
for $d\geq3$.
\end{lem}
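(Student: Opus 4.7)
The plan is to exploit a structural feature of Model B1 absent from the previous models: at every step the total probability that coordinate $i$ moves equals $\tilde{\alpha}_i(S^{(i)}_{t-1})+\tilde{\beta}_i(S^{(i)}_{t-1})=2\alpha_i$, independent of the state. Passing to the continuous-time embedding of Section 1 with an independent Poisson clock of rate $2\alpha_i$ attached to each coordinate, the coordinate processes $S^{(1)}_t,\ldots,S^{(d)}_t$ thus decouple into $d$ mutually independent one-dimensional birth-and-death processes on $\mathbb{Z}$. This decoupling, which was not available in Models 2--4, is the crucial reduction.

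Given the decoupling, I would proceed coordinate-by-coordinate. By the symmetry \eqref{eq-4.14} the distribution of $|S^{(i)}_t|$ coincides with that of a reflected walk on $\mathbb{Z}_+$ as in Theorem \ref{T1}, and by the tail condition \eqref{eq-4.19} its birth rate equals its death rate (both $\alpha_i$) for every level $n>M$, with only bounded perturbations confined to $\{0,1,\ldots,M\}$. Lemma \ref{l0} then shows that each $|S^{(i)}_t|$ is null-recurrent, and a local central limit estimate yields $\mathsf{P}(S^{(i)}_t=0)\asymp t^{-1/2}$ as $t\to\infty$. Independence of the coordinates gives
$$
\mathsf{P}(\mathbf{S}_t=\mathbf{0})=\prod_{i=1}^{d}\mathsf{P}\bigl(S^{(i)}_t=0\bigr)\asymp t^{-d/2},
$$
so the standard summability criterion (see, e.g., \cite{ChungFuchs}) delivers $\sum_{t}\mathsf{P}(\mathbf{S}_t=\mathbf{0})=\infty$ for $d\leq 2$ and $<\infty$ for $d\geq 3$, proving the dichotomy.

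The main obstacle will be the local-limit estimate for the perturbed one-dimensional walks: the compactly supported modifications on $\{0,\ldots,M\}$ must be shown not to spoil the leading-order $t^{-1/2}$ asymptotic of the return probability. A route closer to the proof template of Propositions \ref{T3}--\ref{T5} is to pass instead to the truncated product stationary distribution on $\{0,\ldots,N\}^d$ and verify that the induced norm walk has the same asymptotic birth-to-death ratio as $BD(2,d)$, so that Lemma \ref{l1} applies. That route transforms the obstacle into showing that the perturbation restricted to the ``slab'' $\{\exists\, i:|n^{(i)}|\leq M\}$, which is infinite in the remaining coordinate directions, does not disturb the asymptotic expansion \eqref{eq-2.12}.
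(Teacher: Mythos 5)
Your second route is essentially the paper's own proof. The paper specializes Model B1 so that $\tilde{\alpha}_i(n)/\tilde{\beta}_i(n)=\rho$ for $1\leq|n|\leq M$, models the reflected walk by $d$ independent state-dependent queues with $N$ waiting places, and writes down the truncated product stationary distribution. The ``slab'' you identify as the obstacle is then dispatched by a counting argument, not by anything delicate: the vectors of $\mathcal{N}^+(n)$ having some coordinate $\leq M$ number only $O(n^{d-2})$, against $\binom{n-1}{d-1}\sim n^{d-1}/(d-1)!$ vectors with all coordinates exceeding $M$, while the stationary weights of any two states differ only by factors bounded in terms of $\rho,M,d$; hence the slab's contribution to $P_N[\mathcal{N}^+(n)]$ is negligible as $n\to\infty$. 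The surviving effect of the perturbation is merely to rescale the parameter of the associated birth-and-death process from $BD(2,d)$ to $BD(2\rho^{M+1},d)$, and this is immaterial because the limit \eqref{eq-2.12} in Lemma \ref{l1} does not depend on $\gamma$. So your route 2 is correct in outline, and the one step you leave open is exactly the step the paper supplies.

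Your first route (Poissonize, decouple coordinates, establish $\mathsf{P}(S_t^{(i)}=0)\asymp t^{-1/2}$, multiply, and apply the return-probability criterion) is genuinely different from the paper and would be an attractive alternative, but as written it contains a real gap: the local limit estimate $\mathsf{P}(S_t^{(i)}=0)\asymp t^{-1/2}$ for a compactly perturbed symmetric birth-and-death chain is the entire analytic content of that argument, and nothing in the paper's toolkit supplies it. Lemma \ref{l0} yields null recurrence of each coordinate, but null recurrence gives no rate of decay for the return probability; you would need a separate ingredient (a Karlin--McGregor spectral representation, or a comparison/coupling with the unperturbed walk) to rule out the perturbation on $\{0,\dots,M\}$ changing the exponent. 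Two smaller points: the criterion you want is the elementary Green's-function criterion $\int_0^\infty p_{\mathbf{0}\mathbf{0}}(t)\,dt=\infty$ for the irreducible continuous-time chain (in discrete time the coordinates are not independent and the product formula fails; and \cite{ChungFuchs} concerns walks with i.i.d.\ increments, which these are not); and the decoupling under Poissonization is not special to Model B1 --- it holds in Models 2--4 too, since there the rate at which coordinate $i$ moves depends only on $S^{(i)}_{t-1}$, which is precisely the independence used in the proofs of Theorems \ref{T1} and \ref{T2} and emphasized in Remark \ref{r0}. What is special to Model B1 is only that this rate is constant.
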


\begin{proof} To simplify the derivations and make the results of calculations
observable, we prove this lemma for the particular random walk,
assuming that if $1\leq\big|S^{(i)}_{t-1}\big|\leq M$, then
$$
\tilde{\alpha}_i\big(S^{(i)}_{t-1}\big)=\alpha_i^*, \quad
\tilde{\beta}_i\big(S^{(i)}_{t-1}\big)=2\alpha_i-\alpha_i^*.
$$
In addition, we assume that $\alpha_i^*/(2\alpha_i-\alpha_i^*)=\rho$
is the same constant for all $i$. The made assumption does not
logically change the proof. Then the random walk is modeled by the
following queueing system.

\smallskip

Consider the following series of $d$ state-dependent Markovian
queueing systems. The arrival rate of each of $d$ mutually
independent queueing systems depends on queue-length as follows. If
the $i$th system is empty, the arrival rate is $2\alpha_i$.
Otherwise, is there is at least one customer in the system, then the
arrival rate is $\alpha_i$. The service rate depends on the
queue-length as follows. If immediately before a moment of service
began there are more than $M$ customers in the system, then the
service rate is $\alpha_i$. Otherwise, it is $\beta_i$. The values
of parameters $\beta_i$ are scaled on the basis of the original
proportion between $\alpha_i^*$ and $2\alpha_i-\alpha_i^*$.

The proof of this lemma is similar to that of the proof of Theorem
\ref{T1}. We choose $N$ large enough that is greater than $M$, and
consider first an $i$th queueing system with $N$ waiting places.
Using the notation similar to that in the proof of Theorem \ref{T1},
for that queueing system from the Chapman-Kolmogorov equations we
obtain $P_N^{(i)}(1)=2(\alpha_i/\beta_i)P_N^{(i)}(0)=2\rho
P_N^{(i)}(0)$, $P_N^{(i)}(n+1)=\rho P_N(n)$ for $n=0,1,\ldots, M-1$,
and $P_N^{(i)}(n+1)=P_N^{(i)}(n)$ for $n>M-1$, and then,
$$
P_N(\mathbf{n})=\prod_{i=1}^dP_{N}^{(i)}\left(n^{(i)}\right).
$$
Next, assuming that $\|\mathbf{n}\|<N$, we have
\begin{equation}\label{eq-4.15}
P_N\left[\mathcal{N}^+(n)\right]=\sum_{\mathbf{n}\in\mathcal{N}^+(n)}P_N(\mathbf{n}).
\end{equation}
Similarly to \eqref{eq-3.16}, the explicit presentation for the
probability $P_N\big[\mathcal{N}^+(n)\big]$ can be derived, first in
the case $\alpha_1=\alpha_2=\ldots=\alpha_d\equiv\alpha$ and then in
the general case. For large $n$ and $N>n$ we will derive the
expansion for $p_n(d)$ based on the following results. First, we
take into account that for any two vectors $\mathbf{n}_1$ and
$\mathbf{n}_2$, the components of which all are greater than $M$, we
have
$$
\frac{P_N(\mathbf{n}_2)}{P_N(\mathbf{n}_1)}=1.
$$
If all components of vector $\mathbf{n}_2$ are greater than $M$,
while there are $d_0$ zero components of vector $\mathbf{n}_1$, then
\begin{equation*}\label{eq-4.18}
\frac{P_N(\mathbf{n}_2)}{P_N(\mathbf{n}_1)}\geq
2^{d_0}\rho^{(M+1)d_0},
\end{equation*}
where the equality is satisfied in the only case where all other
$d-d_0$ components are greater than $M$. In addition, as
$N\to\infty$, similarly to \eqref{eq-3.25}
\begin{equation}\label{eq-4.17}
\Pi_\infty(\mathbf{n}_1,
\mathbf{n}_2)=\lim_{N\to\infty}\frac{P_N(\mathbf{n}_2)}{P_N(\mathbf{n}_1)}.
\end{equation}

Then, for sufficiently large $n$, \eqref{eq-4.15} is evaluated by
\begin{equation}\label{eq-4.16}
P_N\left[\mathcal{N}^+(n)\right]
\asymp\sum_{\big\{\mathbf{n}\in\mathcal{N}^+(n): \quad
\min\{n^{(1)},n^{(2)},\ldots,n^{(d)}\}>M\big\}}P_N(\mathbf{n}),
\end{equation}
since the contribution of the terms $P_N(\mathbf{n})$ with
$$
\mathbf{n}\in\mathcal{N}^+(n)\setminus\big\{\mathbf{n}\in\mathcal{N}^+(n):
\quad \min_{1\leq i\leq d}n^{(i)}>M\big\}
$$
is negligible in \eqref{eq-4.15} as $n\to\infty$. Hence, based on
\eqref{eq-4.16} and the earlier result in \eqref{eq-6.2}, for
sufficiently large $n$ and $N>n$ we have the estimate
$$
p_n(d)\asymp\frac{C_0(2\rho^{M+1}, n,d)+C(2\rho^{M+1},n,d)}
{C_0(2\rho^{M+1},n,d)+2C(2\rho^{M+1},n,d)},
$$
where
$$
C(\gamma,n,d)=\sum_{i=1}^di\gamma^i\binom{d}{i}\binom{n-1}{i-1},
$$
and
$$
C_0(\gamma,n,d)=\gamma\sum_{i=1}^d(d-i)\gamma^i\binom{d}{i}\binom{n-1}{i-1}.
$$
Hence, as $n\to\infty$, the probability $p_n(d)$ is asymptotically
equivalent with the birth probability $p(n)$ in the birth-and-death
process $BD(2\rho^{M+1},d)$. Thus, according to Lemma \ref{l1} the
random walk is recurrent for $d\leq2$ and transient for $d\geq3$.
The proof is completed.
\end{proof}

\medskip
\textit{Model 5}. We consider the family of random walks defined by
\eqref{eq-1.1} and \eqref{eq-1.2}. Assume that $\mathbf{e}_t$
depends on the state $\mathbf{S}_{t-1}$ as follows. It takes value
$\mathbf{1}_i$ with probability
$\tilde{\alpha}_i\big(S_{t-1}^{(i)}\big)\geq c>0$, value
$(-\mathbf{1}_i)$ with probability
$\tilde{\beta}_i\big(S_{t-1}^{(i)}\big)\geq c>0$, where
$\tilde{\alpha}_i\big(S_{t-1}^{(i)}\big)+\tilde{\beta}_i\big(S_{t-1}^{(i)}\big)
\leq2\alpha_i$, $2\sum_{i=1}^d\alpha_i=1$, and $\mathbf{e}_t$ takes
value $\mathbf{0}$ with the complementary probability
$1-\sum_{i=1}^d\big[\tilde{\alpha}_i\big(S_{t-1}^{(i)}\big)
+\tilde{\beta}_i\big(S_{t-1}^{(i)}\big)\big]$. The value
$c<\min\{\alpha_1, \alpha_2,\ldots, \alpha_d\}$ is an arbitrarily
small positive value. The further specifications of the
probabilities $\tilde{\alpha}_i\big(S^{(i)}_{t-1}\big)$ and
$\tilde{\beta}_i\big(S^{(i)}_{t-1}\big)$ are as follows. Generally,
we assume \eqref{eq-4.14}, and if $\big|S^{(i)}_{t-1}\big|>M$, then
\eqref{eq-4.19}.

\begin{thm}\label{T6}
The random walk in Model 5 is recurrent for $d\leq2$ and transient
for $d\geq3$.
\end{thm}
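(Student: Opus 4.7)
The plan is to follow the scheme of Lemma~\ref{l2} directly, rather than to rely on a coupling reduction. The key structural observation is that in Model 5 the jump rates $\tilde{\alpha}_i\big(S^{(i)}_{t-1}\big)$ and $\tilde{\beta}_i\big(S^{(i)}_{t-1}\big)$ associated with the $i$th coordinate depend only on that coordinate. Consequently, in the continuous-time Poisson embedding of Section~1, the $d$ coordinates $S^{(1)}_t,\ldots,S^{(d)}_t$ evolve as mutually independent birth-and-death processes on $\mathbb{Z}$. The symmetry condition \eqref{eq-4.14} $\tilde{\alpha}_i(n)=\tilde{\beta}_i(-n)$ implies, via a short case analysis on the sign of $S^{(i)}$, that $|S^{(i)}_t|$ is itself Markovian on $\mathbb{Z}_+$ with birth rate $\tilde{\alpha}_i(n)$ and death rate $\tilde{\beta}_i(n)$ at each $n\geq 1$ and with transition rate $2\tilde{\alpha}_i(0)$ out of $0$. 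These are precisely the rates of the reflected coordinate $\breve{S}^{(i)}_t$, and by independence one obtains $\|\mathbf{S}_t\|\stackrel{d}{=}\|\breve{\mathbf{S}}_t\|$, so it suffices to analyze the reflected walk.

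Once the problem is reduced to $\breve{\mathbf{S}}_t$, the argument proceeds as in the proof of Lemma~\ref{l2}. I truncate each coordinate at $N>M$, use independence to obtain a product-form stationary distribution $P_N(\mathbf{n})=\prod_i P_N^{(i)}\big(n^{(i)}\big)$, and observe that outside the box $n^{(i)}>M$ the one-dimensional stationary probability $P_N^{(i)}\big(n^{(i)}\big)$ is constant, because the rates match those of Model~1 there, while inside the box $0\leq n^{(i)}\leq M$ it takes finitely many values whose ratios to the outside value are bounded uniformly in $N$ in terms of the parameters $\tilde{\alpha}_i(n),\tilde{\beta}_i(n)$ and the lower bound $c$.

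I then compute the level transition probability $p_n(d)$ through the sum $P_N[\mathcal{N}^+(n)]=\sum_{\mathbf{n}\in\mathcal{N}^+(n)}P_N(\mathbf{n})$. For large $n$ the dominant contribution comes from vectors all of whose components exceed $M$, because the proportion of components lying inside the box is of lower order in $n$. On such vectors $P_N(\mathbf{n})$ is constant up to a uniform multiplicative factor $\gamma^{d_0(\mathbf{n})}$ with $\gamma>0$ determined by the product of box-to-outside ratios. The combinatorial identities from the proof of Theorem~\ref{T1} then yield that $p_n(d)$ is asymptotically equivalent to the birth probability of the birth-and-death process $BD(\gamma,d)$.

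Finally, Lemma~\ref{l1} delivers the dichotomy: $BD(\gamma,d)$ is null-recurrent for $d\leq 2$ and transient for $d\geq 3$ regardless of the value of $\gamma>0$, and therefore so is Model~5. I expect the most delicate step to be the asymptotic identification of $p_n(d)$ with the birth probability of $BD(\gamma,d)$: one must argue carefully that the finite-size perturbation of the stationary distribution inside the box, together with the asymmetry $\tilde{\alpha}_i\neq\tilde{\beta}_i$ and the possibility of positive hold probability in discrete time, produce only lower-order corrections to the large-$n$ asymptotics. This parallels, but is somewhat more intricate than, the analogous step in Lemma~\ref{l2}.
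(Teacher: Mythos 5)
Your route genuinely differs from the paper's. The paper does not analyze Model 5 head-on: it introduces extensions B2 and B3 of Models 2 and 3 (parallel to how Model B1 extends Model 1), adapts the proof of Lemma~\ref{l2} to those, and then sandwiches Model 5 between them by the coupling argument already used for Proposition~\ref{T5}. You instead attack Model 5 directly with the machinery of Lemma~\ref{l2}: independence of the coordinates (which is exactly what the condition $\tilde{\alpha}_i+\tilde{\beta}_i\leq2\alpha_i$ of Remark~\ref{r0} secures), reduction to the reflected walk via \eqref{eq-4.14}, truncation at $N$, product-form stationary distribution, and asymptotic identification of $p_n(d)$ with the birth probability of some $BD(\gamma,d)$. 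Your treatment of the hold probability through the continuous-time embedding is correct and is in fact cleaner than introducing B2/B3 to absorb it.

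There are, however, two places where your argument as written is underpowered, and both are precisely what the paper's coupling is designed to sidestep. First, the identification of $p_n(d)$ with the birth probability of a \emph{single} $BD(\gamma,d)$ requires the ``box-to-outside'' ratio to be the same for every coordinate; this is the simplifying assumption made explicitly in the proof of Lemma~\ref{l2} (the constant $\rho$ common to all $i$). For heterogeneous coordinates the weight of a vector $\mathbf{n}$ depends on \emph{which} components lie in the box, not merely on how many, and the clean $C_0(\gamma,n,d)$, $C(\gamma,n,d)$ combinatorics no longer applies verbatim. The repair is either the proportionality argument from the general case of Theorem~\ref{T1} or a two-sided bound by $BD(\underline{\gamma},d)$ and $BD(\overline{\gamma},d)$ --- the latter being exactly the coupling you tried to avoid; since Lemma~\ref{l1} is insensitive to $\gamma$, either repair closes the gap. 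Second, your ``dominant contribution'' step is stated in a way that would prove too little: the vectors with a component in the box are indeed negligible for $P_N[\mathcal{N}^+(n)]$ itself, but the recurrence/transience dichotomy is decided entirely by the $O(1/n)$ correction to $\lambda_n/\mu_n$ in \eqref{eq-2.11}, and that correction comes \emph{from} those boundary vectors. So they cannot simply be discarded; their contribution must be tracked with the correct stationary weight, which is what the $C_0$ term in the estimate of Lemma~\ref{l2} does. With these two points made precise, your direct argument goes through and yields the same conclusion as the paper's coupling proof.
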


\begin{proof}
The construction of the proof is as follows. As Model B1 is an
extension of Model 1, the similar extensions (called Models B2 and
B3) can be constructed for Models 2 and 3, respectively, and the
proof of Lemma \ref{l2} can be adapted to new Models B2 and B3 as
well. Then, the statement of Theorem \ref{T6} is proved by the way
that is used to prove Proposition \ref{T5} based on coupling
arguments.
\end{proof}

\begin{rem}
Condition \eqref{eq-4.14} that describes Model B1 is technical. It
is used for reduction of the original random work of Model B1 to the
reflected random walk, which in turn is described by the queueing
system constructed in the proof. We reckon that the statement of
Lemma \ref{l2} and Theorem \ref{T6} might be correct for the more
general models that do not include this condition.
\end{rem}

\begin{rem}\label{r0}
The condition
$\tilde{\alpha}_i\big(S_{t-1}^{(i)}\big)+\tilde{\beta}_i
\big(S_{t-1}^{(i)}\big)\leq2\alpha_i$ for Model 5 as well as the
similar condition
$\tilde{\alpha}_i\big(S_{t-1}^{(i)}\big)\leq\alpha_i$ for Model 4
are important. They guarantee that the components $S_t^{(i)}$,
$i=1,2,\ldots,d$ in the corresponding random walks $\mathbf{S}_t$
are independent.
\end{rem}

\subsection{Further examples of recurrent and transient random
walk}\label{S4.3}

The random walks that are described by Model 5 are characterized as
follows. Let $\breve{\mathbf{S}}_t=\big(\breve{S}_t^{(1)},
\breve{S}_t^{(2)}, \ldots, \breve{S}_t^{(d)}\big)$ be the reflected
random walks. Under the assumption that the random walks stay in
each of their states for an exponentially distributed time, the
components $\breve{S}_t^{(i)}$, $i=1,2,\ldots,d$ are thought as the
null-recurrent birth and death processes with the birth rates
$\lambda_n$ and death rates $\mu_n$ satisfying the property
$\lambda_n=\mu_n$ for $n\geq M+1$. In this section we discuss more
general situation of the system of $d$ independent null-recurrent
birth-and-death processes.

\smallskip
\begin{example}\label{ex1}
Let $S_t^{(1)}$ and $S_t^{(2)}$ be two null-recurrent independent
birth-and-death processes, and let $S_0^{(1)}=S_0^{(2)}=0$. For
simplicity of our analysis, assume that the birth-and-death
processes $S_t^{(1)}$ and $S_t^{(2)}$ are identically distributed.
That is both of them are specified by the same birth rates $L_n$ and
death rates $M_n$.
\end{example}

Let $\tau=\inf\{t>0: S_{t+h}^{(1)}+S_{t+h}^{(2)}=0\}$, where $h>0$
is an arbitrary constant.
\begin{thm}\label{T7}
Assume that
\begin{equation*}
\lim_{n\to\infty}\left(\frac{L_n}{M_n}\right)^n>1.
\end{equation*}
Then $\mathsf{P}\{\tau<\infty\}<1$.
\end{thm}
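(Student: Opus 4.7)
The plan is to adapt the truncation-and-product-stationary argument of Theorem \ref{T1} and Theorem \ref{T2}, reducing the two-dimensional joint process to an effective one-dimensional birth-and-death process on the sum $T_t=S_t^{(1)}+S_t^{(2)}$. Since $T_t=0$ if and only if $(S_t^{(1)},S_t^{(2)})=(0,0)$, the assertion $\mathsf{P}\{\tau<\infty\}<1$ is equivalent to transience of this effective birth-and-death process, which I would verify through Lemma \ref{l0}. Write the hypothesis as $\lim(L_n/M_n)^n=e^z$ with $z>0$, so that by the asymptotic underlying Lemma \ref{l0} one has $L_n/M_n=1+z/n+o(1/n)$.

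First, I would truncate each of the two independent birth-and-death processes at a large level $N$ so that each becomes ergodic with stationary distribution $\pi_N(n)=\pi_N(0)\prod_{j=1}^{n}L_{j-1}/M_j$, and independence gives the product form $\pi_N(n_1,n_2)=\pi_N(n_1)\pi_N(n_2)$, with $\pi_N(n)\asymp C_N n^{z}$ at infinity. Mirroring the PASTA-type derivation of \eqref{eq-6.2} and \eqref{eq-3.23}, the limiting upward transition probability of $T$ between successive levels is
\begin{equation*}
p_n\,=\,\frac{\sum_{n_1+n_2=n}\pi_N(n_1)\pi_N(n_2)\,(L_{n_1}+L_{n_2})}{\sum_{n_1+n_2=n}\pi_N(n_1)\pi_N(n_2)\,(L_{n_1}+L_{n_2}+M_{n_1}+M_{n_2})},
\end{equation*}
with $q_n=1-p_n$. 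As $n\to\infty$ the conditional law of $n_1/n$ given $n_1+n_2=n$ under these product weights is asymptotically $\mathrm{Beta}(z+1,z+1)$; combined with $L_k-M_k\sim z/(2k)$ and $L_k+M_k=1$, a short Beta-integral computation giving $\mathsf{E}[1/n_1+1/n_2\mid T=n]\sim 2(2z+1)/(zn)$ yields $p_n-q_n\sim(2z+1)/(2n)$, and hence
\begin{equation*}
\lim_{n\to\infty}(p_n/q_n)^n=e^{2z+1}.
\end{equation*}
Because $2z+1>1$ whenever $z>0$, Lemma \ref{l0} produces transience of the effective birth-and-death process, i.e.\ positive probability that $T$ never returns to $0$, and therefore $\mathsf{P}\{\tau<\infty\}<1$.

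The main obstacle is this last step — rigorously justifying that the non-Markovian level process of $T_t$ between active time instants inherits its recurrence/transience from the effective Markov birth-and-death process defined by the limiting $p_n,q_n$ above. This is the analogue, for general state-dependent birth and death rates, of what Theorem \ref{T1} accomplishes in the symmetric case, and it requires the same ratio-stability as $N\to\infty$ that converted \eqref{eq-3.25} into \eqref{eq-6.2}, despite the normalizing constant $C_N$ itself being $N$-dependent. A secondary technicality is handling the boundary contributions $n_1=0$ or $n_2=0$, where $\pi_N$ departs from the power law $n_i^{z}$ and the rates change because of reflection; their conditional mass on the diagonal $\{n_1+n_2=n\}$ is of order $n^{-(z+1)}$ and is dominated by the bulk Beta mass concentrated near $n_1=n_2=n/2$.
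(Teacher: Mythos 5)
Your proposal follows the same overall scheme as the paper's proof (truncate each component at level $N$, use the product-form stationary measure, reduce the level process of $T_t=S_t^{(1)}+S_t^{(2)}$ to an effective birth-and-death chain, and invoke Lemma \ref{l0}), but it carries out the decisive asymptotic step by a genuinely different route --- and, in fact, a more careful one. The paper evaluates $\lim_n\big(p_n/(1-p_n)\big)^n$ by asserting that $\big(\tfrac{L_{\lfloor\kappa n\rfloor}}{M_{\lfloor\kappa n\rfloor}}\cdot\tfrac{L_{n-\lfloor\kappa n\rfloor}}{M_{n-\lfloor\kappa n\rfloor}}\big)^n\asymp\big(L_n/M_n\big)^n$ and concludes that the limit equals $\mathrm{e}^{c+1}$; but that intermediate claim is false as stated (with $L_k/M_k=1+z/k$ and $\kappa=1/2$ the left-hand side tends to $\mathrm{e}^{4z}$, not $\mathrm{e}^{z}$). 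Your computation replaces this shortcut by the conditional $\mathrm{Beta}(z+1,z+1)$ profile of $n_1/n$ given $n_1+n_2=n$ under the weights $\pi_N(n_1)\pi_N(n_2)\propto n_1^{z}n_2^{z}$; together with $\mathsf{E}[1/U]=(2z+1)/z$ this gives $p_n-q_n\sim(2z+1)/(2n)$ and hence $\lim_n(p_n/q_n)^n=\mathrm{e}^{2z+1}$, which is the correct constant: it agrees with the independent check $\mathsf{P}\{S_t^{(1)}=S_t^{(2)}=0\}=\mathsf{P}\{S_t=0\}^2\sim t^{-(z+1)}$, i.e. effective Bessel dimension $2(z+1)=(2z+1)+1$. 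Since both $\mathrm{e}^{2z+1}$ and the paper's $\mathrm{e}^{z+1}$ exceed $\mathrm{e}$, the qualitative conclusion (transience of the effective chain, hence $\mathsf{P}\{\tau<\infty\}<1$) is the same either way, but your constant is the one to trust quantitatively. The obstacle you flag at the end --- rigorously transferring the recurrence type of the non-Markovian level process to the effective chain built from the limiting $p_n,q_n$ --- is a real gap, but it is precisely the gap already present in the paper's own proofs of Theorems \ref{T1} and \ref{T7}, so your argument is at least as complete as the original; your explicit estimate that the boundary configurations $n_1=0$ or $n_2=0$ carry conditional mass $O(n^{-(z+1)})=o(1/n)$ for $z>0$ is correct and is more than the paper provides.
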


\begin{proof}
The proof is similar to that of Theorem \ref{T1} and based on
asymptotic analysis similar to that provided in this section to
prove Lemma \ref{l1}. First, taking $N$ large, we consider two
independent Markovian queueing systems with $N$ waiting places. For
simplicity, we assume that service times are identically distributed
with rate 1, and interarrival times are identically distributed with
rate $1+c/N$, $c>0$ is some positive constant. The following
arguments of the proof are similar to those given in the proof of
Theorem \ref{T1}, where we derive the asymptotic expression for
$p_n(2)$ as $N\to\infty$, and then in the proof of Lemma \ref{l1},
where we derive
$\lim_{n\to\infty}\big[\lambda_n(\gamma,2)/\mu_n(\gamma,2)\big]^n$.
Note, that the asymptotic behaviour in \eqref{eq-2.11} does not
depend on $\gamma$. Denote
$$
p_n=\lim_{t\to\infty}
\mathsf{P}\left\{S_{t+1}^{(1)}+S_{t+1}^{(2)}=n+1~|~S_{t}^{(1)}+S_{t}^{(2)}=n\right\}.
$$
Taking in account that for any $0<\kappa<1$ and $n\to\infty$
$$
\left(\frac{L_{\lfloor \kappa n\rfloor}}{M_{\lfloor \kappa
n\rfloor}}\cdot\frac{L_{n-\lfloor \kappa n\rfloor}}{M_{n-\lfloor
\kappa n\rfloor}}\right)^n\asymp\left(\frac{L_n}{M_n}\right)^n
$$
($\lfloor a\rfloor$ denotes the integer part of $a$), as
$n\to\infty$ we obtain
$$
\left(\frac{p_n}{1-p_n}\right)^n\asymp\left(\frac{\lambda_n(1,2)}{\mu_n(1,2)}
\cdot \frac{L_{n}}{M_{n}}\right)^n,
$$
and hence,
$$
\lim_{n\to\infty}\left(\frac{p_n}{1-p_n}\right)^n=\mathrm{e}^{c+1}.
$$
Since $c>0$, then according to Lemma \ref{l0} we obtain
$\mathsf{P}\{\tau<\infty\}<1$.
\end{proof}

\medskip

\begin{example}\label{ex2}
 Let $S_t^{(1)}$, $S_t^{(2)},\ldots$, $S_t^{(d)}$ \ $(d\geq3)$, be
independent, identically distributed, null-recurrent birth-and-death
processes. Let $$\tau=\inf\left\{t>0:
\sum_{i=1}^dS_{t+h}^{(i)}=0\right\},$$ where $h>0$ is an arbitrary
constant. Denote the birth and death rates by $L_n$ and $M_n$,
respectively.
\end{example}

\begin{thm}\label{T8}
Assume that
\begin{equation}
\lim_{n\to\infty}\left(\frac{L_n}{M_n}\right)^n\leq
\mathrm{e}^{2-d}.
\end{equation}
Then $\mathsf{P}\{\tau<\infty\}=1$.
\end{thm}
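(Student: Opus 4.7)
The strategy is to extend the argument of Theorem \ref{T7} from $d=2$ to arbitrary $d\geq 3$, combining the combinatorial asymptotics of Theorem \ref{T1} and Lemma \ref{l1} with the external bias supplied by the birth--death rates $L_n,M_n$, and then invoking Lemma \ref{l0}.

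First I would embed the $d$ independent null-recurrent birth-and-death processes $S_t^{(i)}$ into a continuous-time framework, modeling each as an independent Markovian queueing system whose arrival rate equals $L_n$ and service rate equals $M_n$ in state $n\geq 1$, together with the negative-service reflection at zero employed in the proof of Theorem \ref{T1} (which doubles the effective arrival rate out of state $0$). Truncating each queue at $N$ waiting places, with $N$ much larger than the levels of interest, the stationary distribution $P_N(\mathbf{n})$ factorizes over coordinates, so the ratio $\Pi_\infty(\mathbf{n}_1,\mathbf{n}_2)$ analogous to \eqref{eq-3.25} reduces to a product of one-dimensional ratios, controlled asymptotically by products of $L_k/M_k$ with an extra factor of $2$ accounting for each coordinate equal to zero.

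Next I would count transitions between the levels $\mathcal{N}^+(n)$ and $\mathcal{N}^+(n+1)$ of the sum process $\|\mathbf{S}_t\|=\sum_i S_t^{(i)}$ along the lines of the proof of Theorem \ref{T1}, weighting each coordinate by its state-dependent rates. Following the template of the proof of Theorem \ref{T7}, this should yield the asymptotic factorization
$$
\left(\frac{p_n}{1-p_n}\right)^n \asymp \left(\frac{\lambda_n(1,d)}{\mu_n(1,d)}\cdot\frac{L_n}{M_n}\right)^n
$$
as $n\to\infty$. By \eqref{eq-2.12} with $\gamma=1$ the first factor tends to $\mathrm{e}^{d-1}$, and by hypothesis $\lim_n (L_n/M_n)^n \leq \mathrm{e}^{2-d}$, so
$$
\lim_{n\to\infty}\left(\frac{p_n}{1-p_n}\right)^n \leq \mathrm{e}^{d-1}\cdot\mathrm{e}^{2-d} = \mathrm{e}^{1}.
$$
Lemma \ref{l0} then gives that the level process is null-recurrent (or, if the limit falls strictly below $1$, positive recurrent); in either case $\|\mathbf{S}_t\|$ returns to $0$ almost surely, yielding $\mathsf{P}\{\tau<\infty\}=1$. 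The constant $h>0$ in the definition of $\tau$ plays no role in this asymptotic argument.

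The main obstacle I anticipate is the factorization step: namely, justifying that asymptotically only vectors with all components large contribute to $P_N[\mathcal{N}^+(n)]$ (the analogue of \eqref{eq-4.16} in the proof of Lemma \ref{l2}), so that the combinatorial counting of Theorem \ref{T1} cleanly separates from the external birth--death bias $L_n/M_n$. This requires generalizing \eqref{eq-3.5} and \eqref{eq-3.25} to genuinely state-dependent per-coordinate rates and verifying that the contribution of boundary vectors (those with one or more zero components) is negligible relative to the bulk contribution at level $n$.
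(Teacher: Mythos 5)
Your proposal follows essentially the same route as the paper: reduce to the level process $\sum_i S_t^{(i)}$, establish $\left(\frac{p_n}{1-p_n}\right)^n\asymp\left(\frac{\lambda_n(1,d)}{\mu_n(1,d)}\cdot\frac{L_n}{M_n}\right)^n$, combine $\mathrm{e}^{d-1}$ with the hypothesis $\mathrm{e}^{2-d}$ to get a limit at most $\mathrm{e}$, and invoke Lemma \ref{l0}. You supply more detail on the queueing/truncation and boundary-negligibility steps than the paper (which simply refers back to Theorem \ref{T7}), and you correctly write $\lambda_n(1,d)/\mu_n(1,d)$ where the paper's displayed formula has an apparent typo $(1,2)$.
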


\begin{proof}
Using the similar arguments as in the proof of Theorem \ref{T7}, we
have as follows. Let
$$
p_n=\lim_{t\to\infty}
\mathsf{P}\left\{\sum_{i=1}^dS_{t+1}^{(i)}=n+1~|~\sum_{i=1}^dS_{t}^{(i)}=n\right\}.
$$
As $n\to\infty$ we obtain
$$
\left(\frac{p_n}{1-p_n}\right)^n\asymp\left[\frac{\lambda_n(1,2)}{\mu_n(1,2)}
\cdot\frac{L_{n}}{M_{n}}\right]^n.
$$
Hence,
$$
\lim_{n\to\infty}\left(\frac{p_n}{1-p_n}\right)^n\leq\mathrm{e}^{d-1}
\mathrm{e}^{2-d}=\mathrm{e}.
$$
Then, the statement of the theorem follows from Lemma \ref{l0}.
\end{proof}

\section{Discussion and concluding remarks}\label{S5}
 In the present paper, we gave a new classification of
multidimensional random walks. Based on that classification, we
established new results on the behaviour of random walks. The main
techniques used in the paper are reduction to birth-and-death
processes, asymptotic analysis and coupling arguments. The concepts
of conservative and semiconservative random walks are of independent
interest. The principally new results of the paper include the
analysis of Examples \ref{ex1} and \ref{ex2} resulted in the proof
of Theorems \ref{T7} and \ref{T8}. The statement of Proposition
\ref{T3} was previously covered by the results in Chung and Fuchs
\cite{ChungFuchs} (see also \cite{ChungOrnstein}) and Foster and
Good \cite{FosterGood}. A version of the proof of Chung and Fuchs
theorem is presented in Durrett \cite{Durett}. Specifically, Theorem
4.2.8 on page 166 and Theorem 4.2.13 on page 170 together claim that
any unbiased random walk in $\mathbb{R}^d$ having increments in the
domain of attraction of a Gaussian distribution is transient if and
only if $d\geq3$. The classes of random walks in \cite{ChungFuchs}
and \cite{FosterGood}, however, do not cover state-dependent random
walks considered in Models 2, 3, 4 and 5. MacPhee and Manshikov
\cite{MacPheeMenshikov} showed that a nonzero drift of a random walk
on a lower-dimensional subspace is sufficient in order to change the
recurrence classification. The method of Lyapunov functions that is
used by Lamperti \cite{Lamperti} provides intuition for the phase
transition. In its simplest version, the idea is to consider the
process $\varphi_t=\|\mathbf{S}_t\|=\Big[\sum_{i=1}^d\big(
S_t^{(i)}\big)^2\Big]^{1/2}$, the recurrence or transience of which
is determined by comparing
$$
\mathsf{E}\{\varphi_{t+1}-\varphi_{t}| \mathbf{S}_t=\mathbf{x}\}
$$
and
$$
\mathsf{E}\{(\varphi_{t+1}-\varphi_{t})^2|
\mathbf{S}_t=\mathbf{x}\}.
$$
It would be interesting to investigate the applicability of the
method in \cite{Lamperti} to the models under consideration here.

The results by Doyle and Snell \cite{DoyleSnell} also concern
state-dependent random walks similar to those described by Model B1
in the framework of electric networks theory. To be specific, we
refer the arXiv version of the book, where the relevant results are
in Section 2.4 ``Random walks on more general infinite networks",
page 101. The formulation and proof of the basic theorem is given on
page 102. The formulated theorem violates the conditions mentioned
in Remark \ref{r0}. Unfortunately, we could not follow the proof of
that theorem.

\section*{Acknowledgement} The author thanks the anonymous referee for
valuable comments leading to substantial improvement of the paper.

\end{document}